\definecolor{red}{rgb}{1,0,0}
\newtheorem{thm}{Theorem}[section]
\newtheorem{prop}[thm]{Proposition}
\newtheorem{cor}[thm]{Corollary}
\newtheorem{lem}[thm]{Lemma}
\newtheorem{ex}[thm]{Example}
\newtheorem{quest}[thm]{Question}
\newtheorem{obs}[thm]{Observation}
\def\ord#1{| #1 |}
\newcommand{\diag}{{\rm diag}}
\newcommand{\rank}{\operatorname{rank}}
\newcommand{\bff}{{\bf f}}
\newcommand{\bv}{{\bf v}}
\newcommand{\bu}{{\bf u}}
\newcommand{\bx}{{\bf x}}
\newcommand{\bz}{{\bf z}}
\newcommand{\be}{{\bf e}}
\newcommand{\R}{\mathbb{R}}
\newcommand{\Rnn}{\R^{n\times n}}
\newcommand{\Rn}{\R^{v}}
\newcommand{\Rsn}{\R^{(v)}}
\newcommand{\G}{\mathcal{G}}
\newcommand{\RR}{\mathcal{R}}
\newcommand{\sym}{\mathcal{S}}
\newcommand{\MM}{\mathcal{M}}
\newcommand{\TT}{\mathcal{T}}
\newcommand{\TRA}{\TT_{\RR_A}}
\newcommand{\NSA}{\NN_{\sym_A}}
\newcommand{\TSA}{\TT_{\sym_A}}
\newcommand{\NRA}{\NN_{\RR_A}}
\newcommand{\NN}{\mathcal{N}}
\newcommand{\mr}{\operatorname{mr}}
\newcommand{\amr}{\operatorname{amr}}
\newcommand{\M}{\operatorname{M}}
\newcommand{\nul}{\operatorname{null}}
\newcommand{\Span}{\operatorname{span}}
\newcommand{\exi}{\bar\xi}
\newcommand{\emr}{\overline{\mr}}
\newcommand{\x}{\times}
\newcommand{\bit}{\begin{itemize}}
\newcommand{\eit}{\end{itemize}}
\newcommand{\ben}{\begin{enumerate}}
\newcommand{\een}{\end{enumerate}}
\newcommand{\beq}{\begin{equation}}
\newcommand{\eeq}{\end{equation}}
\newcommand{\bea}{\begin{eqnarray*}}
\newcommand{\eea}{\end{eqnarray*}}
\newcommand{\bean}{\begin{eqnarray}}
\newcommand{\eean}{\end{eqnarray}}
\newcommand{\bpf}{\begin{proof}}
\newcommand{\epf}{\end{proof}}
\newcommand{\noi}{\noindent}
\newcommand{\E}{{\bf E}}
\newcommand{\textdef}{\textit}
\title{Expected values of parameters associated with the minimum rank of a graph\thanks{Research of Hall, Hogben, Shader partially supported by American Institute of Mathematics SQuaRE,``Minimum Rank of Symmetric Matrices described by a Graph." Martin's research partially supported by NSA Grant H-98230-08-1-0015.}}
\author{Tracy Hall\thanks{Department of Mathematics, Brigham Young University, Provo UT 84602 (h.tracy@gmail.com).}  \and  Leslie Hogben\thanks{Department of Mathematics, Iowa State University, Ames, IA 50011, USA (LHogben@iastate.edu) and American Institute of Mathematics, 360 Portage Ave,
Palo Alto, CA 94306, USA (hogben@aimath.org).}\and Ryan Martin\thanks{Department of Mathematics, Iowa State University, Ames, IA 50011, USA (rymartin@iastate.edu).} \and  Bryan  Shader\thanks{Department of Mathematics, University of Wyoming, Laramie, WY 82071, USA
(bshader@uwyo.edu).}}
\begin{document}
\maketitle

\begin{abstract}
{We investigate the expected value  of various graph parameters associated with the minimum rank of a graph, including minimum rank/maximum nullity 
and related Colin de Verdi\`ere-type parameters.  Let $G(v,p)$ denote the usual Erd\H{o}s-R\'enyi random graph on $v$ vertices with edge probability $p$.  We obtain bounds for the expected value of the random variables $\mr(G(v,p))$, $\M(G(v,p))$, $\nu(G(v,p))$ and $\xi(G(v,p))$, which yield bounds on the average values of these parameters over all labeled graphs of order $v$.} \end{abstract}

\noi {\bf AMS subject classifications:} 05C50; 05C80; 15A03 

\noi {\bf Keywords:} minimum rank; maximum nullity; average minimum rank; average maximum nullity; expected value; Colin de Verdi\`ere type parameter; positive semidefinite minimum rank; delta conjecture; rank; matrix; random graph; graph

\section{Introduction}\label{sintro}


The set of $v\times v$ real symmetric matrices will be denoted by $\Rsn$.  For $A\in \Rsn$,
the {\em graph} of $A$, denoted $\G(A)$, is the graph with vertices
$\{1,\dots,v \}$  and edges $\{ \{i,j \} : ~a_{ij} \ne 0,  1 \le i <j \le v \}$.  Note that the diagonal of $A$ is ignored in determining $\G(A)$.
The {\em minimum rank} of a graph $G$ on $v$ vertices  is  $$\mr(G)=\min\{ \rank A : A \in \Rsn, \G(A)=G\}.$$
The {\em maximum nullity} or {\em maximum corank} of a graph $G$  is  
$$\M(G)=\max\{ \nul A : A \in \Rsn, \G(A)=G\}.$$  Note that $$\mr(G)+\M(G)=v.$$
Here a {\em graph} is a pair $G=(V(G),E(G))$, where $V$ is the (finite, nonempty) set of vertices and $E$ is the set of edges (an edge is a two-element subset of vertices); what we call a graph is sometimes called a simple undirected graph. We use the notation $v(G)=|V(G)|$ and $e(G)=|E(G)|$.

The {\em minimum rank problem} (of a graph, over the real numbers) is to  determine $\mr(G)$ for any graph $G$. See  \cite{FH}  for a survey of known results and discussion of the motivation for  the minimum rank problem; an extensive bibliography is also provided there.  The  minimum rank problem was
a focus of the 2006 workshop ``Spectra of families of matrices described by graphs, digraphs, and sign patterns"  held at the American Institute of Mathematics \cite{AIM}. One of the questions raised during
the workshop was:

\begin{quest}
What is the average minimum rank of a graph on $v$ vertices?
\end{quest}

Formally, we define the {\em average minimum rank} of graphs of order $v$ to be the sum over all labeled graphs  of order $v$ of the minimum ranks of  the graphs,   divided by the number of (labeled) graphs of order $v$.  That is,
$$\amr(v)=\frac {\sum_{v(G) = v} \mr(G)}{2^{v \choose 2}}.$$

Let $G(v,p)$ denote the Erd\H{o}s-R\'enyi random graph on $v$ vertices with edge probability $p$.  That is, every pair of vertices is adjacent, independently, with probability $p$.  Note that 
for 
$G(v,1/2)$, 
every labeled $v$-vertex graph is equally likely (each labeled graph is chosen with probability $2^{-\binom{v}{2}}$), so
$$ \amr(v)=E\left[\mr(G(v,1/2))\right] . $$

Our goal in this paper is to determine statistics about the random variable $\mr(G(v,p))$ and other related parameters.  We highlight the two main results of this paper by focusing on the $p=1/2$ case:
\begin{thm}\label{main}
   Given $\amr(v)=E\left[\mr(G(v,1/2))\right]$, then for $v$ sufficiently large,
   \begin{enumerate}
      \item \label{it:main1} $\displaystyle\left|\mr(G(v,1/2))-\amr(v)\right|<\sqrt{v\ln\ln v}$ with probability approaching 1 as $v\rightarrow\infty$, and
      \item \label{it:main2} $0.146907v<\amr(v)<0.5v+\sqrt{7v\ln v}$.
   \end{enumerate}
\end{thm}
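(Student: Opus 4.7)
The plan is to prove the concentration in (1) via a martingale argument, the lower bound in (2) by a Warren-type counting argument, and the upper bound in (2) by an explicit construction.

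For (1), I would apply the Azuma--Hoeffding inequality to the vertex-exposure Doob martingale $X_0, X_1, \ldots, X_v$, where $X_i$ is the conditional expectation of $\mr(G(v,1/2))$ given all edges among the first $i$ vertices, so $X_0 = \amr(v)$ and $X_v = \mr(G(v,1/2))$. The key deterministic ingredient is that toggling all edges incident to a single vertex changes $\mr$ by at most $2$, because removing one row--column pair of a real symmetric matrix changes its rank by at most $2$. Hence $|X_i - X_{i-1}| \le 2$, and Azuma--Hoeffding yields
\[
\Pr\bigl[\,|\mr(G(v,1/2)) - \amr(v)| \ge t\,\bigr] \le 2 e^{-t^2/(8v)}.
\]
Taking $t = \sqrt{v \ln \ln v}$ makes the right-hand side $2(\ln v)^{-1/8} \to 0$, which proves (1).

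For the lower bound in (2), I would count the number of graphs $G$ on $v$ vertices with $\mr(G) \le r$. Every real symmetric matrix of rank at most $r$ factors as $A = B^{T} S B$ for some $B \in \R^{r \times v}$ and signature $S = \diag(\pm 1)$; the off-diagonal entries of $A$ are then quadratic polynomials in the $rv$ real entries of $B$. The Oleinik--Petrovski--Thom--Warren theorem bounds the number of distinct zero/nonzero patterns among $\binom{v}{2}$ such polynomials, giving
\[
\bigl|\{G : \mr(G) \le r\}\bigr| \le 2^{r} \left(\tfrac{Cv}{r}\right)^{O(rv)}.
\]
Setting $r = cv$ and requiring this count to be $o\bigl(2^{\binom{v}{2}}\bigr)$ produces a transcendental inequality of the form $c \log_{2}(\alpha/c) < 1/2$, whose numerical optimization with the sharp constants yields $0.146907$. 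Markov's inequality then forces $\Pr[\mr(G(v,1/2)) \le cv] \to 0$, so $\amr(v) > 0.146907\, v$ for $v$ sufficiently large.

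For the upper bound in (2), I would exhibit, for a typical $G \sim G(v,1/2)$, a symmetric matrix $A$ with $\G(A) = G$ and $\rank A \le v/2 + \sqrt{7v \ln v}$. The natural route is to identify a deterministic graph parameter $\pi(G) \ge \mr(G)$ whose value for $G(v,1/2)$ concentrates near $v/2$ with deviation $O(\sqrt{v \ln v})$, the latter coming from Chernoff-plus-union-bound on edge/non-edge counts (minimum degree, matching structure, or similar balanced statistics of $G(v,1/2)$ all sit within $O(\sqrt{v \ln v})$ of their means). The bound on $\amr(v)$ then follows by taking expectation, using the trivial $\mr(G) \le v-1$ to absorb the $o(1)$-probability bad event.

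The main obstacle is the upper bound: the concentration in (1) and the Warren-type lower bound in (2) both follow from standard machinery once the Lipschitz and factorization inputs are in place, but explicitly producing a rank-$\le v/2 + O(\sqrt{v \ln v})$ matrix whose pattern matches a random graph is a genuine combinatorial construction and is the conceptually delicate step of the theorem.
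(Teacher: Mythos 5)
Your part (1) matches the paper's argument essentially verbatim (vertex-exposure martingale, Lipschitz constant $2$ from $0\le\mr(G)-\mr(G-x)\le 2$, Azuma with $t=\sqrt{v\ln\ln v}$), so that part is fine. The other two parts each have a genuine gap.

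For the lower bound, the general strategy (factor rank-$\le r$ symmetric matrices as $X^TD X$ with $D$ a signature matrix, view the off-diagonal entries as degree-$2$ polynomials in $rv$ variables, and count patterns) is the paper's strategy, but the count you write down does not produce the constant $0.146907$. A plain Warren-type bound of the form $(Cv^2/(rv))^{O(rv)}$ gives a threshold near $c\approx 0.07$, and even the sharp R\'onyai--Babai--Ganapathy zero-pattern bound $\binom{rv+2\binom{v}{2}}{rv}$ applied to all $\binom{v}{2}$ off-diagonal polynomials only reaches $c\approx 0.105$. The extra idea in the paper is to first condition on the event that $e(G)$ is within $v\sqrt{2\ln v}$ of $\frac12\binom{v}{2}$ (which holds with probability $1-O(v^{-2})$ by Chernoff) and then use a \emph{support-restricted} version of the zero-pattern theorem: the number of zero-patterns whose support has size at most $s$ is at most $\binom{n+sd}{n}$. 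Taking $s\approx\frac12\binom{v}{2}$ rather than $s=\binom{v}{2}$ replaces $\binom{cv^2+v^2}{cv^2}$ by roughly $\binom{cv^2+\frac12 v^2}{cv^2}\approx\bigl((c+\tfrac12)^{c+1/2}/(c^c(\tfrac12)^{1/2})\bigr)^{v^2}$, and it is this refined count whose balance against $2^{-\binom{v}{2}}$ yields the equation $(c+\tfrac12)^{2c+1}/c^{2c}=1$ and hence $c\approx 0.1469077$. Without the support restriction your optimization cannot reach the stated constant.

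For the upper bound, you have correctly identified the missing step but not supplied it, and it is not a construction one improvises: the paper gets it from the Lov\'asz--Saks--Schrijver theorem that every graph on $v$ vertices has a faithful orthogonal representation in dimension $v-\kappa(G)$, which gives the deterministic bound $\mr(G)\le\mr_+(G)\le v-\kappa(G)$. This is combined with the Bollob\'as--Thomason fact that $\Pr[\kappa(G)=\delta(G)]\to 1$ for $G(v,p)$ (the paper proves a quantitative $3v^{-2}$ version) and the Chernoff bound $\delta(G)\ge pv-\sqrt{6v\ln v}$ with probability $1-2v^{-2}$, so that $\E[\kappa(G)]\ge pv-\sqrt{7v\ln v}$ and hence $\E[\mr(G)]\le(1-p)v+\sqrt{7v\ln v}$. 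Note that minimum degree alone is not known to suffice here: the ``delta conjecture'' $\mr(G)\le v-\delta(G)$ is open in general, so the passage through vertex connectivity and orthogonal representations is the essential ingredient your proposal lacks.
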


In general, we show that the random variable $\mr(G(v,p))$ is tightly concentrated around its mean (Section
\ref{sconcent}), and establish lower and upper bounds for its expected value in Sections \ref{sv9} and
\ref{sMdelta}.  We also establish  an upper bound on the Colin de Verdi\`ere type parameter  $\xi(G)$, which is related to $\M(G)$, in  Section \ref{sxi} (the definition of $\xi$ is given in that section).    This bound is used in Section \ref{snew} to establish bounds on the expected value of the random variable $\xi(G(v,p))$.  The upper bound on $\xi(G(v,p))$ may lead to a better upper bound on the expected value of $\M(G(v,p))$ and hence a better lower bound on the expected value of $\mr(G(v,p))$.

\section{Tight concentration of expected minimum rank}\label{sconcent}

Although we are unable to determine precisely the mean of $\mr(G(v,p))$, in this section we show that this random variable is tightly concentrated around its mean, and  thus $\mr(G(v,1/2))$ is tightly concentrated around the average
minimum rank.

 A {\em martingale} is a sequence of random variables $X_0,\dots,X_{v-1}$ such that
$$\E[X_{i+1} | X_i,X_{i-1},\dots,X_1]=X_i.$$
 The martingale we use is the vertex exposure martingale (as described on pages 94-95 of  \cite{AS}) for the graph parameter $f(G)=\frac 1 2 \mr(G)$  (the factor $\frac 1 2$ is needed because deletion of a vertex may change the minimum rank by $2$; see Corollary \ref{tightmr} below).  $G(v,p)$ is sampled to obtain a specific graph $H$, and $X_i$ is the expected value of the graph parameter $f(G)=\frac 1 2 \mr(G)$ when the neighbors of vertices $v_1,\dots,v_i$ are known.  Since nothing is known for $X_0$, $X_0=\E[f(G(v,p))]=\frac 1 2\E[\mr(G(v,p))]$. Since the entire graph $H$ is revealed at stage $v-1$, $X_{v-1}=\frac 1 2\mr(H)$.

  The method for showing tight concentration uses Azuma's inequality for martingales (see Section 7.2 of~\cite{AS}) and was pioneered by Shamir and Spencer~\cite{SS}.  The following corollary of Azuma's inequality is used.

\begin{thm} {\rm  \cite[Corollary 7.2.2]{AS}} Let $b=X_0,\dots,X_v$ be a martingale with
$$\ord{X_{i+1}-X_i}\le 1$$
for all $0\le i\le v$.  Then
$$\Pr[|X_v-b|>\beta\sqrt v]<2e^{-\beta^2/2}.$$
\end{thm}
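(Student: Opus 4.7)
The plan is to prove this by the exponential moment method, which is the standard route to an Azuma-type inequality.

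First I would introduce the martingale differences $Y_i := X_i - X_{i-1}$ for $1 \le i \le v$, so that $X_v - b = \sum_{i=1}^v Y_i$. By the martingale hypothesis $\E[Y_i \mid X_{i-1},\dots,X_0] = 0$, and by assumption $\ord{Y_i} \le 1$. I would establish the one-sided tail $\Pr[X_v - b > \beta\sqrt{v}] < e^{-\beta^2/2}$ first and then obtain the two-sided bound by symmetry (applying the one-sided bound to the martingale $-X_0,\dots,-X_v$) together with a union bound, which supplies the factor of $2$.

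For the one-sided bound, fix $\lambda > 0$ and apply Markov's inequality to the nonnegative random variable $e^{\lambda(X_v-b)}$:
$$\Pr[X_v - b > \beta\sqrt v] = \Pr[e^{\lambda(X_v-b)} > e^{\lambda\beta\sqrt v}] \le e^{-\lambda\beta\sqrt v}\,\E[e^{\lambda(X_v-b)}].$$
The heart of the argument is the following lemma: if $Y$ is any random variable with $\E[Y] = 0$ and $\ord{Y} \le 1$, then $\E[e^{\lambda Y}] \le e^{\lambda^2/2}$. To prove it I would use the chord interpolation $e^{\lambda y} \le \frac{1-y}{2}e^{-\lambda} + \frac{1+y}{2}e^{\lambda}$, valid for $y \in [-1,1]$ by convexity of the exponential; taking expectations yields $\E[e^{\lambda Y}] \le \cosh\lambda$, and a term-by-term Taylor comparison gives $\cosh\lambda \le e^{\lambda^2/2}$.

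Applying this lemma conditionally on $X_{i-1},\dots,X_0$ and telescoping via the tower property produces $\E[e^{\lambda(X_v - b)}] \le e^{v\lambda^2/2}$. Substituting back and optimizing in $\lambda$ (the optimum is $\lambda = \beta/\sqrt v$) gives $\Pr[X_v - b > \beta\sqrt v] \le e^{-\beta^2/2}$, and combining with the symmetric lower tail completes the argument. The main obstacle is the conditional exponential moment bound $\E[e^{\lambda Y_i} \mid X_{i-1},\dots,X_0] \le e^{\lambda^2/2}$; once that is in hand, the telescoping over $i$ and the single-variable optimization in $\lambda$ are routine.
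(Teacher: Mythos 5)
Your proof is correct: it is the standard exponential-moment (Chernoff/Hoeffding) derivation of Azuma's inequality, with the convexity bound $e^{\lambda y}\le \frac{1-y}{2}e^{-\lambda}+\frac{1+y}{2}e^{\lambda}$ giving the conditional estimate $\E[e^{\lambda Y_i}\mid X_0,\dots,X_{i-1}]\le\cosh\lambda\le e^{\lambda^2/2}$, followed by telescoping and optimizing at $\lambda=\beta/\sqrt v$. The paper does not prove this statement itself --- it quotes it as Corollary 7.2.2 of Alon and Spencer --- and your argument is essentially the proof given in that cited source, so there is nothing to add.
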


The proof that derives the tight concentration of the chromatic number of the random graph \cite[Theorem 7.2.4]{AS} from  \cite[Corollary 7.2.2]{AS} via the vertex exposure martingale remains valid for any graph parameter $f(G)$ such that when $G$ and $H$ differ only in the exposure of a single vertex, then $\ord{f(G) - f(H)} \le 1$. 
\begin{thm}
Let $p\in (0,1)$.  Let $f$ be a graph invariant such that for any graphs $G$ and $H$, if
  $x\in V(G)=V(H)$  and $G-x = H-x$, then $\ord{f(G) - f(H)} \le 1$.  Let $\mu=\E\left[f(G(v,p))\right]$.  Then, for any $\beta>0$,
$$ \Pr\left[\left|f(G(v,p))-\mu\right|>\beta\sqrt{v-1}\right]<2e^{-\beta^2/2} .$$ \label{thm:mgale}
\end{thm}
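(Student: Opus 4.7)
The plan is to carry out the vertex exposure martingale argument of \cite[Theorem 7.2.4]{AS} essentially verbatim, with the given invariant $f$ playing the role of the chromatic number. The discussion preceding the statement already asserts that the chromatic number proof ``remains valid'' in this generality; my task is just to spell out why, and the whole point of the Lipschitz hypothesis on $f$ is to make exactly this go through.

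First I would fix an ordering $v_1,\dots,v_v$ of the vertex set, sample $G(v,p)$ to produce a random graph $H$, and define the vertex exposure martingale $X_0,X_1,\dots,X_{v-1}$ by letting $X_i$ be the conditional expectation of $f(H)$ given the edges of $H$ among $\{v_1,\dots,v_{i+1}\}$. Then $X_0=\E[f(H)]=\mu$ (no edges are yet revealed) and $X_{v-1}=f(H)$ (the entire graph has been revealed), and $(X_i)$ is a martingale by construction, with $v-1$ increments.

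The one substantive step is to verify the increment bound $\ord{X_{i+1}-X_i}\le 1$, which is exactly where the hypothesis on $f$ enters. I would do this by the standard coupling. Passing from $X_i$ to $X_{i+1}$ amounts to revealing the back-edges from $v_{i+2}$ to $\{v_1,\dots,v_{i+1}\}$. For any two candidate back-neighborhoods $S,S'\subseteq\{v_1,\dots,v_{i+1}\}$ and any completion of the remaining (as yet unexposed) edges, the two resulting graphs $G$ and $G'$ agree off the vertex $v_{i+2}$, i.e.\ $G-v_{i+2}=G'-v_{i+2}$, so the hypothesis on $f$ gives $\ord{f(G)-f(G')}\le 1$. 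Averaging this pointwise bound over the remaining randomness and then over $S,S'$ weighted by their conditional probabilities yields $\ord{X_{i+1}-X_i}\le 1$ almost surely.

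Finally I would apply Corollary 7.2.2 of \cite{AS}, quoted in the unnumbered Azuma theorem just above, to the martingale $X_0,\dots,X_{v-1}$. Each of its $v-1$ increments has absolute value at most $1$, so the corollary yields
$$\Pr\bigl[\ord{X_{v-1}-X_0}>\beta\sqrt{v-1}\bigr]<2e^{-\beta^2/2}.$$
Since $X_0=\mu$ and $X_{v-1}=f(G(v,p))$, this is the inequality in the statement. The main obstacle throughout is the Lipschitz verification; a naive comparison of $X_{i+1}$ and $X_i$ at two different exposures of $v_{i+2}$ looks like it could give a difference of $2$ (one step from each toward the unconditioned mean), and the coupling above is what reduces this to $1$.
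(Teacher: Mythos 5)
Your proposal is correct and follows essentially the same route as the paper, which itself simply observes that the Alon--Spencer vertex exposure martingale argument for the chromatic number carries over verbatim once the one-vertex Lipschitz hypothesis on $f$ is assumed. The coupling verification of $\ord{X_{i+1}-X_i}\le 1$ that you spell out is exactly the step the paper leaves implicit, and the application of \cite[Corollary 7.2.2]{AS} to the $v-1$ increments matches the stated bound.
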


\begin{cor}\label{tightmr}
Let $p\in (0,1)$ be fixed and let 
$\mu=\E\left[\mr(G(v,p))\right]$.  For any $\beta>0$,
$$ \Pr\left[\left|\mr(G(v,p))-\mu\right|>2\beta\sqrt{v-1}\right]<2e^{-\beta^2/2} . $$ In particular,
$$ \left|\mr(G(v,p))-\mu\right|<\sqrt{v\ln\ln v} $$
with probability approaching $1$ as $v\rightarrow\infty$. \label{cor:concen}
\end{cor}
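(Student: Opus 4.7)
The plan is to apply Theorem~\ref{thm:mgale} directly with $f(G) = \tfrac{1}{2}\mr(G)$. The first step, which is really the only nontrivial content, is to verify the single-vertex Lipschitz hypothesis: if $G$ and $H$ have the same vertex set, $x \in V(G) = V(H)$, and $G-x = H-x$, then $|f(G) - f(H)| \le 1$, equivalently $|\mr(G) - \mr(H)| \le 2$.

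To see this I will show the stronger claim that for any graph $G$ and vertex $x$, $\mr(G-x) \le \mr(G) \le \mr(G-x) + 2$. For the lower inequality, take $A \in \Rsn$ with $\G(A) = G$ and $\rank A = \mr(G)$; deleting the row and column indexed by $x$ produces a symmetric matrix $A'$ with $\G(A') = G-x$ and $\rank A' \le \rank A$, hence $\mr(G-x) \le \mr(G)$. For the upper inequality, let $B$ realize $\mr(G-x)$ and build a symmetric matrix with graph $G$ by appending a row and column that encode the edges from $x$ (with any convenient diagonal entry); appending a single row and a single column to a symmetric matrix raises the rank by at most $2$, so $\mr(G) \le \mr(G-x) + 2$. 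Applying these bounds to both $G$ and $H$ (which share $G-x$) gives $|\mr(G)-\mr(H)| \le 2$, as required.

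With the Lipschitz property in hand, Theorem~\ref{thm:mgale} applied to $f=\tfrac12\mr$ (and with $\mu' = \E[f(G(v,p))] = \mu/2$) yields
\[
\Pr\!\left[\tfrac12\bigl|\mr(G(v,p)) - \mu\bigr| > \beta\sqrt{v-1}\right] < 2e^{-\beta^2/2},
\]
and multiplying the deviation threshold by $2$ gives exactly the first displayed inequality.

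For the ``in particular'' statement I will pick $\beta$ so that the tail bound still tends to $0$ while $2\beta\sqrt{v-1} \le \sqrt{v \ln\ln v}$. The natural choice is $\beta = \tfrac12\sqrt{\ln\ln v}$, for which $2\beta\sqrt{v-1} = \sqrt{(v-1)\ln\ln v} < \sqrt{v\ln\ln v}$, and the failure probability is bounded by $2e^{-\beta^2/2} = 2(\ln v)^{-1/8} \to 0$ as $v \to \infty$. I do not expect any real obstacle here; the only step requiring care is the rank-perturbation argument above, and it is standard linear algebra (rank increases by at most $2$ when appending a symmetric row/column).
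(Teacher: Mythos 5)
Your proposal is correct and follows essentially the same route as the paper: verify the single-vertex Lipschitz condition $|\mr(G)-\mr(H)|\le 2$, apply Theorem~\ref{thm:mgale} to $f=\tfrac12\mr$, and take $\beta=\tfrac12\sqrt{\ln\ln v}$ for the second claim. The only difference is that you spell out the proof of the inequality $0\le \mr(G)-\mr(G-x)\le 2$ (correctly, via deleting and bordering a row and column), whereas the paper simply cites it as well known.
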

\bpf
It is well-known that for any graph $G$ and any vertex $x\in V(G)$,
$ 0\leq\mr(G)-\mr(G-x)\leq 2.$  Thus if $V(H)=V(G)$ and $G-x = H-x$,  then $\ord{\mr(G) - \mr(H)} \le 2$. 
For the first statement, apply Theorem~\ref{thm:mgale} with $f(G)=\frac 1 2 \mr(G)$.  For the second statement, let $\beta=\frac 1 2 \sqrt{\ln\ln v}$ and conclude
$$  \Pr\left[\left|\mr(G(v,p))-\mu\right|>\sqrt{v\ln\ln v}\right]<2\left(\frac 1 {{\ln v}}\right)^{1/8}.$$
\epf

Note that Corollary~\ref{tightmr} gives the result in Theorem~\ref{main}(\ref{it:main1}).

\section{Observations on  parameters of random graphs}\label{sdegree}

Large deviation bounds easily show that the degree sequence of the random graph is tightly concentrated.  In this section, we provide some well-known results that will be used later.  The version of the Chernoff-Hoeffding bound that we use is given in~\cite{AS}.
\begin{thm}{\rm \cite[Theorem A.1.16]{AS}}
   Let $X_i$, $1\leq i\leq n$, be mutually independent random variables with all $\E[X_i]=0$ and all $|X_i|\leq 1$.  Set $S=X_1+\cdots+X_n$.  Then for any $a>0$, $$\Pr[S>a]<\exp\{-a^2/(2n)\}.$$ \label{thm:chernoff}
\end{thm}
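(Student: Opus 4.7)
The plan is to use the standard exponential moment method pioneered by Chernoff. For any parameter $\lambda > 0$, Markov's inequality applied to the nonnegative random variable $e^{\lambda S}$ gives
$$\Pr[S > a] \leq e^{-\lambda a}\,\E[e^{\lambda S}],$$
and the mutual independence of the $X_i$ factors the moment generating function as $\E[e^{\lambda S}] = \prod_{i=1}^n \E[e^{\lambda X_i}]$.  The whole game is then to bound each factor and choose $\lambda$ well.

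The heart of the argument is Hoeffding's lemma: if $X$ satisfies $\E[X] = 0$ and $|X| \leq 1$, then $\E[e^{\lambda X}] \leq e^{\lambda^2/2}$ for all real $\lambda$.  My plan for this is to exploit convexity of the map $t \mapsto e^{\lambda t}$ on $[-1,1]$: writing $t = \frac{1+t}{2}\cdot 1 + \frac{1-t}{2}\cdot(-1)$ yields the pointwise bound
$$e^{\lambda t} \leq \frac{1+t}{2}e^{\lambda} + \frac{1-t}{2}e^{-\lambda}.$$
Taking expectations and using $\E[X] = 0$ gives $\E[e^{\lambda X}] \leq \cosh\lambda$.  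Comparing Taylor series term-by-term, $\cosh\lambda = \sum_{k\geq 0} \lambda^{2k}/(2k)!$ and $e^{\lambda^2/2} = \sum_{k\geq 0} \lambda^{2k}/(2^k k!)$, and the elementary inequality $(2k)! \geq 2^k k!$ (from the factorization $(2k)! = [1\cdot 3\cdots (2k-1)]\cdot 2^k\cdot k!$) finishes the lemma.

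Combining the two ingredients gives $\E[e^{\lambda S}] \leq e^{n\lambda^2/2}$, so
$$\Pr[S > a] \leq \exp\!\left(-\lambda a + \tfrac{n\lambda^2}{2}\right).$$
I would then optimize the exponent over $\lambda > 0$; the minimizer of the quadratic is $\lambda = a/n$, which substitutes to yield the desired bound $\exp(-a^2/(2n))$.  The main obstacle in writing out the proof is really only Hoeffding's lemma (the secant/convexity step followed by the coefficient comparison); once that is in hand, everything else is mechanical.  The strict inequality in the theorem's conclusion poses no real difficulty: at least one of the intermediate bounds, for instance $\cosh\lambda < e^{\lambda^2/2}$ when $\lambda \neq 0$ (since $(2k)! > 2^k k!$ for $k \geq 2$), is strict, and strictness propagates through the remaining inequalities.
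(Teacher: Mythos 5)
The paper does not prove this statement---it is quoted directly from Alon and Spencer \cite[Theorem A.1.16]{AS}, so there is no in-paper proof to compare against. Your argument is correct and is essentially the standard proof given in that reference: Markov's inequality applied to $e^{\lambda S}$, independence to factor the moment generating function, the convexity (secant) bound $\E[e^{\lambda X_i}]\le\cosh\lambda$ followed by the coefficient comparison $\cosh\lambda\le e^{\lambda^2/2}$, and the optimal choice $\lambda=a/n$; your handling of the strict inequality via $(2k)!>2^k k!$ for $k\ge 2$ is also sound.
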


\vspace{-3mm}
It is well-known that Theorem~\ref{thm:chernoff} can be applied to the number of edges in a random graph: 

\begin{thm}
   Let $p$ be fixed and let $G$ be distributed according to $G(v,p)$.  Then,
   $$ e(G)\leq p\binom{v}{2}+v\sqrt{2\ln v}, $$
   with probability at least $1-v^{-2}$.  In addition, $e(G)\geq p\binom{v}{2}-v\sqrt{2\ln v}$ with probability at least $1-v^{-2}$. \label{thm:edges}
\end{thm}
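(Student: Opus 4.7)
The plan is to apply Theorem~\ref{thm:chernoff} directly to the edge count, after recentering the edge indicators to have mean zero.

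First I would index the $\binom{v}{2}$ potential edges of $G$, and for each pair $e$ let $Y_e$ be the Bernoulli$(p)$ indicator of the event that $e\in E(G)$. By definition of $G(v,p)$, the $Y_e$ are mutually independent. Setting $X_e=Y_e-p$ gives mutually independent random variables with $\E[X_e]=0$, and $|X_e|\le \max(p,1-p)\le 1$ since $p\in(0,1)$. Letting $S=\sum_e X_e$ (a sum of $n=\binom{v}{2}$ terms), we have
$$S = e(G)-p\binom{v}{2}.$$

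Next I would apply Theorem~\ref{thm:chernoff} with $a=v\sqrt{2\ln v}$. The exponent in the Chernoff bound becomes
$$\frac{a^2}{2n} \;=\; \frac{2v^2\ln v}{v(v-1)} \;=\; \frac{2v\ln v}{v-1} \;\ge\; 2\ln v$$
for all $v\ge 2$. Therefore
$$\Pr\!\left[e(G)-p\binom{v}{2} > v\sqrt{2\ln v}\right] = \Pr[S>a] < e^{-a^2/(2n)} \le e^{-2\ln v} = v^{-2},$$
which is the upper-tail statement in the theorem.

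The lower-tail statement follows by the same argument applied to the variables $-X_e$, which also satisfy the hypotheses of Theorem~\ref{thm:chernoff}; alternatively one can appeal to the symmetry of the distribution of $S$ about $0$ up to signs in the tail bound. Either way, with probability at least $1-v^{-2}$, $e(G)\ge p\binom{v}{2}-v\sqrt{2\ln v}$. There is no real obstacle here: the only thing to verify carefully is that the chosen $a$ is large enough to beat $2\ln v$ in the exponent, which is exactly why the factor $\sqrt{2\ln v}$ (rather than, say, $\sqrt{\ln v}$) appears in the stated deviation.
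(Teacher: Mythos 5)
Your proof is correct and follows essentially the same route as the paper: recenter the $\binom{v}{2}$ independent edge indicators, apply Theorem~\ref{thm:chernoff} with $a=v\sqrt{2\ln v}$, and negate the variables for the lower tail (your explicit check that $a^2/(2n)\ge 2\ln v$ is a nice touch the paper leaves implicit). One small caveat: the parenthetical appeal to ``symmetry of the distribution of $S$ about $0$'' is not valid for $p\ne 1/2$, but your primary argument via the variables $-X_e$ is exactly right and is what the paper does.
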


\begin{proof}
   Let $G$ be distributed according to the random variable $G(v,p)$.
   We may regard \\$\left\{\{x,y\}\in E(G) : x\neq y\right\}$ to be $\binom{v}{2}$ mutually independent indicator random variables.  Subtract $p$ from each and they become random variables with mean $0$ and magnitude at most 1.  Using Theorem~\ref{thm:chernoff}, we see that $\Pr\left[e(G)-p\binom{v}{2}>a\right]<\exp\left\{-a^2/\left(2\binom{v}{2}\right)\right\}$.

   Choose $a=v\sqrt{2\ln v}$; we see that
   $$ e(G)-p\binom{v}{2}\leq v\sqrt{2\ln v} , $$
   with probability at least $1-v^{-2}$.  By multiplying the random variables above by $-1$, we obtain
   $$ e(G)-p\binom{v}{2}\geq -v\sqrt{2\ln v}, $$
   with probability at least $1-v^{-2}$.
\end{proof}

 Let $\delta(G)$ (respectively, $\Delta(G)$) denote the minimum (maximum) degree of a vertex of $G$.  Theorem~\ref{thm:chernoff} can also be applied to the neighborhood of each vertex to give bounds on $\delta(G)$ and $\Delta(G)$.

\begin{thm}
   Let $p$ be fixed and let $G$ be distributed according to $G(v,p)$.  Then,
   $$  pv-\sqrt{6v\ln v} \le \delta(G)\le \Delta(G)\leq pv+\sqrt{6v\ln v}$$
   with probability at least $1-2v^{-2}$.  \label{thm:degree}
\end{thm}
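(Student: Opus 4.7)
The plan is to mimic the proof of Theorem~\ref{thm:edges}, but apply the Chernoff--Hoeffding bound to the neighborhood of each vertex individually and then take a union bound over all $v$ vertices. The loss of a factor of $v$ in the union bound is exactly what inflates the constant under the square root from $2$ (as in Theorem~\ref{thm:edges}) up to $6$.

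First, fix a vertex $x \in V(G)$ and write $d(x) = \sum_{y \neq x} \mathbf{1}_{\{x,y\} \in E(G)}$, a sum of $v-1$ mutually independent Bernoulli$(p)$ indicators. Subtracting $p$ from each indicator produces random variables $X_y$ with $\E[X_y] = 0$ and $|X_y| \le 1$, whose sum $S_x = d(x) - p(v-1)$ fits the hypotheses of Theorem~\ref{thm:chernoff}. Applying that theorem (and its symmetric counterpart obtained by negating all $X_y$) yields
\[
    \Pr\!\left[\,\bigl|d(x) - p(v-1)\bigr| > a\,\right] \;<\; 2 \exp\!\left(-\frac{a^2}{2(v-1)}\right).
\]

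Second, I would choose $a$ so that $a^2 / (2(v-1)) \ge 3\ln v$, giving each tail bound the value $v^{-3}$. Taking $a = \sqrt{6v\ln v} - p$ suffices for all large $v$, since expanding $a^2$ gives $6v\ln v - 2p\sqrt{6v\ln v} + p^2$, which divided by $2(v-1)$ exceeds $3\ln v$ once $v$ is large enough (the $\frac{v}{v-1}$ factor dominates the vanishing correction $\sqrt{6v\ln v}/(v-1)$). A union bound over the $v$ choices of vertex $x$ then gives
\[
    \Pr\!\left[\,\exists\, x : \bigl|d(x) - p(v-1)\bigr| > a\,\right] \;<\; 2 v \cdot v^{-3} \;=\; 2v^{-2}.
\]

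Third, I would convert the bound $|d(x) - p(v-1)| \le \sqrt{6v\ln v} - p$ into the stated bound $|d(x) - pv| \le \sqrt{6v\ln v}$. For the upper side, $d(x) \le p(v-1) + a = pv - p + a \le pv + \sqrt{6v\ln v}$; for the lower side, $d(x) \ge p(v-1) - a = pv - p - a \ge pv - \sqrt{6v\ln v}$ by the choice of $a$. Taking the max/min over vertices delivers the stated bound on $\Delta(G)$ and $\delta(G)$ simultaneously with probability at least $1 - 2v^{-2}$.

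There is no serious obstacle; the only mildly delicate point is the bookkeeping noted above, namely that degrees concentrate around $p(v-1)$ rather than $pv$, but this $O(1)$ discrepancy is absorbed trivially by the $\sqrt{6v\ln v}$ slack. The choice of the constant $6$ (versus the constant $2$ in Theorem~\ref{thm:edges}) is forced by needing each per-vertex failure probability to be $o(v^{-2})$ so the union bound over $v$ vertices still yields a $O(v^{-2})$ total failure probability.
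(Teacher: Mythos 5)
Your proposal follows essentially the same argument as the paper: a per-vertex application of Theorem~\ref{thm:chernoff} to the $v-1$ centered edge indicators, followed by a union bound over the $v$ vertices, with $a$ chosen so each tail is about $v^{-3}$. Your handling of the $O(1)$ gap between $p(v-1)$ and $pv$ is in fact slightly more careful than the paper's, which silently absorbs it; the proof is correct.
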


\begin{proof}
   Let $G$ be distributed according to the random variable $G(v,p)$.
   For each $x\in V(G)$, we may regard $\left\{\{x,y\}\in E(G) : y\neq x\right\}$ to be $v-1$ mutually independent indicator random variables.  Using Theorem~\ref{thm:chernoff}, we see that $\Pr\left[\left|\deg(x)-p(v-1)\right|>a\right]<2\exp\{-a^2/(2(v-1))\}$.

   Thus, the probability that there exists a vertex with degree that deviates by more than $a$ from $p(v-1)$ is at most
   $$ v\times 2\exp\{-a^2/(2(v-1))\} . $$

   Choose $a=\sqrt{6v\ln v}$ and we see that, simultaneously for all $x\in V(G)$,
   $$ \left|\deg(x)-pv\right|\leq \sqrt{6v\ln v}, $$
   with probability at least $1-2v^{-2}$.
   \end{proof}

\section{A lower bound for expected minimum rank}\label{sv9}

In this section we show that if $v$ is sufficiently large, then the expected value of $\mr(G(v,p))$ is at least $c(p) v + o(v)$, where $c(p)$ is the solution to equation (\ref{eq:c1}) below.  In the case $p=1/2$, $c(p)\approx 0.1469077$, so the average minimum rank   is greater than $0.146907v$ for $v$ sufficiently large.

The {\em zero-pattern} $\zeta(\bx)$ of the real vector $\bx = (x_1,\ldots, x_\ell)$ is the $(0, *)$-vector obtained
from $\bx$ by replacing its nonzero entries by $*$.   The \textit{support} of  the zero pattern $\bz=(z_1,\dots,z_\ell)$ is the set $S(\bz)=\{i : z_i\ne 0\}$. We 
modify the proof of Theorem 4.1 from \cite{RBG} to obtain the following  result.

\begin{thm}\label{RBGthm} If $\bff(\bx) = (f_1(\bx), f_2(\bx),\ldots, f_m(\bx))$ is
an $m$-tuple of polynomials in $n$ variables over a field $F$  with $m\ge n$, each $f_i$ of degree at most $d$, then the number of zero-patterns $\bz=\zeta(\bff(\bx))$  with $\ord{S(\bz)}\le s$ is at most
$$ {n+sd  \choose n} .\label{RBGeq}$$
\end{thm}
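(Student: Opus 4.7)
The plan is to adapt the Ronyai--Babai--Ganapathy argument essentially verbatim, but to exploit the support bound to cap the degrees of the auxiliary polynomials at $sd$ rather than $md$. For each zero-pattern $\bz$ that actually arises as $\zeta(\bff(\bx))$ for some $\bx \in F^n$ with $|S(\bz)| \le s$, fix a witness $\bx_\bz$ satisfying $\zeta(\bff(\bx_\bz)) = \bz$. To this $\bz$ I associate the polynomial
\[ g_\bz(\bx) \;=\; \prod_{i \in S(\bz)} f_i(\bx). \]
Since $|S(\bz)| \le s$ and each $f_i$ has degree at most $d$, we have $\deg g_\bz \le sd$. The plan is to show that the finite collection $\{g_\bz\}$ (indexed by realized zero-patterns with support size at most $s$) is linearly independent in the vector space of polynomials in $n$ variables of degree at most $sd$. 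Since that space has dimension $\binom{n+sd}{n}$, the stated bound on the number of such zero-patterns follows immediately.

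For the linear independence, the key observation is that, by construction of $g_\bz$ and $\bx_{\bz'}$,
\[ g_\bz(\bx_{\bz'}) \ne 0 \quad \Longleftrightarrow \quad S(\bz) \subseteq S(\bz'), \]
because $f_i(\bx_{\bz'}) \ne 0$ iff $i \in S(\bz')$. In particular $g_\bz(\bx_\bz) \ne 0$. Crucially, since a zero-pattern is by definition a $(0,*)$-vector, distinct zero-patterns $\bz \ne \bz'$ have distinct supports $S(\bz) \ne S(\bz')$.

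Now suppose we have a linear dependence $\sum_\bz c_\bz g_\bz = 0$, and among the $\bz$ with $c_\bz \ne 0$, choose one, say $\bz^*$, whose support $S(\bz^*)$ is minimal under inclusion. Evaluating the dependence at $\bx_{\bz^*}$ kills every term with $c_\bz \ne 0$ and $\bz \ne \bz^*$: such a $\bz$ has $S(\bz) \not\subsetneq S(\bz^*)$ by minimality, and $S(\bz) = S(\bz^*)$ would force $\bz = \bz^*$; hence $S(\bz) \not\subseteq S(\bz^*)$, so $g_\bz(\bx_{\bz^*}) = 0$. What remains is $c_{\bz^*}\, g_{\bz^*}(\bx_{\bz^*}) = 0$, contradicting $g_{\bz^*}(\bx_{\bz^*}) \ne 0$. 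Therefore $\{g_\bz\}$ is linearly independent.

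There is no serious obstacle here; the only subtle point is ensuring that the triangular evaluation argument still works when we index by supports of size at most $s$ rather than over all realized zero-patterns, which is handled by the minimality choice of $\bz^*$ together with the fact that the support determines the zero-pattern. The hypothesis $m \ge n$ plays no role in this core argument beyond its role in the original RBG setup, so no new ingredient is required.
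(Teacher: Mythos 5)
Your proposal is correct and follows essentially the same route as the paper's proof, which also adapts the R\'onyai--Babai--Ganapathy argument by associating to each realized zero-pattern $\bz$ with $|S(\bz)|\le s$ the product $g_{\bz}=\prod_{i\in S(\bz)}f_i$ of degree at most $sd$ and proving linear independence via evaluation at a witness with support minimal among the nonzero coefficients. The only cosmetic difference is that you take $S(\bz^*)$ minimal under inclusion while the paper takes it of minimal cardinality, and you are slightly more explicit that distinct zero-patterns have distinct supports; both points are harmless.
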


\begin{proof}
We follow the proof  in \cite{RBG}. Assume that the $m$-tuple ${\bf f}=(f_1,\ldots,f_m)$ of polynomials over field $F$ has the $M$ zero-patterns $\bz_1,\dots,\bz_M$. Choose  $\bu_1,\ldots,\bu_M\in F^n$ such that $\zeta(\bff(\bu_i))=\bz_i$.

 Set
$$ g_i=\prod_{k\in S(\bz_i)}f_k . $$
Note that
$$ g_i(\bu_j)\neq 0\qquad\mbox{if and only if}\qquad S(\bz_i)\subseteq S(\bz_j). $$

We show that polynomials $g_1,\ldots,g_M$ are linearly independent.  Assume on the contrary that there is a nontrivial linear combination $\sum_{i=1}^M\beta_ig_i=0$, where each $\beta_i\in F$.  Let $j$ be a subscript such that $|S(\bz_j)|$ is minimal among the $S(\bz_i)$ with $\beta_i\neq 0$, so for every $i$ such that  $i\ne j$ and $\beta_i\ne 0$, $S(\bz_i)\not\subseteq S(\bz_j)$.  So substituting $\bu_j$ into the linear combination  gives $\beta_j g_j(\bu_j)= 0$, a contradiction.

Thus, $g_1,\ldots,g_M$ are linearly independent over $F$.  Each $g_i$ has degree at most $sd$ and the dimension of the space of polynomials of degree $\leq D$ is exactly $\binom{n+D}{n}$.
\end{proof}

By Sylvester's Law of Inertia, every real symmetric $v\times v$ matrix of rank at most $r$ can be expressed in  the form $X^TD_iX$ for some $i$ such that $0\le i \le r$, where  $D_i=\diag(1,\dots,1,-1,\dots,-1)$ is an $r\x r$ diagonal matrix with  $i$ diagonal entries equal to $1$ and $r-i$ equal to $-1$ and $X$ is an $r\times v$ real matrix.   There are $r+1$ diagonal matrices $D_i$.  Let each entry of $X$ be a variable; the  total number of variables is $rv$ and each entry of the matrix $X^TD_iX$ is a  polynomial of degree at most $2$.

 Let $c(p)$ be the solution to \begin{equation}\label{eq:c1}
\frac{(c+p)^{2c+2p}}{(c)^{2c}(p)^{2p}}p^p(1-p)^{(1-p)}=1
\end{equation}
for a fixed value of $p (0<p<1)$.  This equation has a unique solution, because it is equivalent to  $\frac{(c+p)^{2 c+2 p}}{c^{2 c} }=\frac{p^p}{(1-p)^{(1-p)}}$, and for a fixed $p$ and $c\ge 0$, $\frac{(c+p)^{2 c+2 p}}{c^{2 c} }$ is a strictly increasing function of $c$  and $p^{2p}<\frac{p^p}{(1-p)^{(1-p)}}$.  The  values of $c(p), 0<p<1$ are graphed in Figure \ref{figcp}.
\begin{figure}[!h]
\begin{center}
\scalebox{.8}{\includegraphics{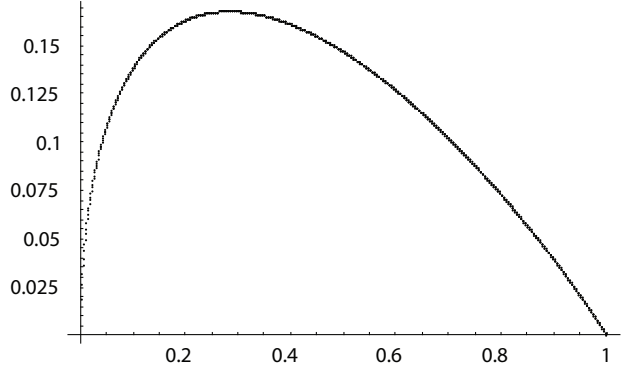}}
\caption{The graph of $c(p)$}\label{figcp}
\end{center}
\end{figure}

\begin{thm}\label{newthm} Let $G$ be distributed according to $G(v,p)$ for a fixed $p$, $0<p<1$.
For any $c<c(p)$,
 the expectation $ \E\left[\mr(G)\right]$ satisfies $$ \E\left[\mr(G)\right]>cv$$
for $v$  sufficiently large.

Furthermore, for any such  $c$,
$\Pr\left[\mr(G(v,p))\leq cv\right]\to 0$ as $v\to\infty$.
\end{thm}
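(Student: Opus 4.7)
The plan is to establish the probability statement first and deduce the expectation bound from it. Using Sylvester's Law of Inertia (as recorded just before equation~(\ref{eq:c1})), every symmetric $v\times v$ matrix of rank at most $r := \lfloor cv\rfloor$ can be written as $X^T D_i X$ for some $r\times v$ real matrix $X$ and some $i\in\{0,1,\dots,r\}$. The $\binom{v}{2}$ off-diagonal entries of $X^T D_i X$ are polynomials of degree at most $2$ in the $rv$ entries of $X$, and the support of the corresponding zero-pattern coincides with the edge set of $\G(X^TD_iX)$. For any $c<c(p)$ one has $c<1/2$ (a check of equation~(\ref{eq:c1}) at $c=1/2$ shows that its left-hand side exceeds $1$ for every $p\in(0,1)$), so $rv\leq\binom{v}{2}$ and Theorem~\ref{RBGthm} applies: the number of graphs on $v$ labeled vertices with $\mr(G)\leq cv$ and $e(G)\leq s$ is at most $(r+1)\binom{rv+2s}{rv}$.

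Using this count, I would split over the number of edges:
\[
\Pr[\mr(G(v,p))\leq cv] \leq \Pr\!\bigl[e(G)>p\tbinom{v}{2}+v\sqrt{2\ln v}\bigr] + \sum_{s\leq s^*} (r+1)\binom{rv+2s}{rv}\, p^s(1-p)^{\binom{v}{2}-s},
\]
where $s^* := p\binom{v}{2}+v\sqrt{2\ln v}$. Theorem~\ref{thm:edges} bounds the first term by $v^{-2}$. For the second, I would apply Stirling's formula to $\binom{rv+2s}{rv}$ using $rv=cv^2+O(v)$ and $2s=pv^2+o(v^2)$ uniformly in $s$ over the central window, giving
\[
\ln\!\left[\binom{rv+2s}{rv}\, p^s(1-p)^{\binom{v}{2}-s}\right] \leq v^2\!\left[c\ln\tfrac{c+p}{c} + p\ln\tfrac{c+p}{p} + \tfrac12\bigl(p\ln p + (1-p)\ln(1-p)\bigr)\right] + o(v^2).
\]
Doubling the bracketed expression and exponentiating shows that it equals $\tfrac12\ln F(c)$, where $F(c)$ is the left-hand side of equation~(\ref{eq:c1}). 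Since $F$ is strictly increasing in $c$ and $F(c(p))=1$, the bracketed quantity is a strictly negative constant whenever $c<c(p)$, so each summand is $\exp(-\Theta(v^2))$; multiplying by the $O(v\sqrt{\ln v})$ summands in the window and the $(r+1)=O(v)$ factor still leaves the total as $o(1)$.

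Combining the two pieces yields $\Pr[\mr(G(v,p))\leq cv]\to 0$. The expectation bound then follows from $\mr(G)\geq 0$:
\[
\E[\mr(G(v,p))]\geq cv\cdot\Pr[\mr(G(v,p))>cv] = cv(1-o(1)),
\]
so given any $c_0<c(p)$, choosing $c$ with $c_0<c<c(p)$ and applying the above gives $\E[\mr(G(v,p))]>c_0v$ for all sufficiently large $v$. I expect the main technical obstacle to be carrying out the Stirling estimate uniformly in $s$ across the central window so that polylogarithmic fluctuations in $s$ cannot corrupt the leading $v^2$ asymptotics, together with checking that the polynomial prefactors (number of summands, the factor $r+1$, and Stirling's $O(\log v)$ error) are genuinely dominated by the negative leading $v^2$ coefficient.
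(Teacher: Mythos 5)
Your overall strategy --- Sylvester's Law of Inertia plus the zero-pattern count of Theorem~\ref{RBGthm}, a Chernoff bound on the edge count, and a Stirling estimate of the resulting binomial coefficient --- is exactly the paper's. However, there is a genuine gap in how you truncate the edge count. You discard only the upper tail $e(G)>s^*$ and then sum $(r+1)\binom{rv+2s}{rv}p^s(1-p)^{\binom{v}{2}-s}$ over \emph{all} $s\le s^*$, asserting that the logarithm of each summand is at most $v^2\cdot\tfrac12\ln F(c)+o(v^2)$ ``uniformly in $s$ over the central window'' (here $F(c)$ denotes the left-hand side of equation~(\ref{eq:c1})). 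But the sum is not confined to the central window, and for $p<1/2$ the asserted bound fails below it. Writing $s=\sigma v^2$, the logarithm of the summand is, up to $o(v^2)$, equal to $v^2\phi(\sigma)$ with $\phi(\sigma)=(c+2\sigma)\ln(c+2\sigma)-c\ln c-2\sigma\ln(2\sigma)+\sigma\ln p+(\tfrac12-\sigma)\ln(1-p)$, a strictly concave function of $\sigma$ whose maximum on $[0,p/2]$ sits at the interior point $\sigma_*=\frac{c}{2\left(\sqrt{(1-p)/p}-1\right)}$ whenever $c<\sqrt{p(1-p)}-p$; this holds for every $c<c(p)$ when, for instance, $p=0.1$ (there $\sqrt{p(1-p)}-p=0.2$ while $c(0.1)\approx 0.14$). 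Since $\phi(p/2)=\tfrac12\ln F(c)\to 0$ as $c\to c(p)^-$ while $\phi(\sigma_*)-\phi(p/2)$ stays bounded away from $0$, for $c$ close to $c(p)$ one gets $\phi(\sigma_*)>0$: the summand at $s=\sigma_*v^2$ is $e^{+\Theta(v^2)}$, so the bound you place on the sum diverges instead of tending to $0$. What goes wrong at such $s$ is that $\binom{rv+2s}{rv}$ exceeds the total number of graphs with $s$ edges (the zero-pattern count is vacuous there), while $p^s(1-p)^{\binom{v}{2}-s}$ is far larger than $\left(p^p(1-p)^{1-p}\right)^{\binom{v}{2}}$.

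The repair is exactly the paper's: discard the lower tail as well, conditioning on the two-sided event $\left|e(G)-p\binom{v}{2}\right|\le v\sqrt{2\ln v}$ (each tail costs only $v^{-2}$ by Theorem~\ref{thm:edges}), and then bound the per-graph probability $\left(\frac{p}{1-p}\right)^{e(G)}(1-p)^{\binom{v}{2}}$ using the \emph{lower} bound on $e(G)$ when $p<1/2$ and the upper bound when $p>1/2$. Only then is every remaining term controlled by $\left(p^p(1-p)^{1-p}\right)^{\binom{v}{2}}$ times a subexponential correction, after which a single application of the zero-pattern count at $s=s^*$ together with Corollary~\ref{bincoeffcor} finishes the argument. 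The rest of your write-up --- the check that $c(p)<1/2$ so that $m\ge n$ in Theorem~\ref{RBGthm}, the identification of your bracketed constant with $\tfrac12\ln F(c)$, and the deduction $\E[\mr(G)]\ge cv\,\Pr[\mr(G)>cv]$ --- is correct and matches the paper's computation.
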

\bpf
 Let $G$ be distributed according to $G(v,p)$.  Let $\mathcal{E}$ be the event that $\left|e(G)-p\binom{v}{2}\right|\leq v\sqrt{2\ln v}$.  By the law of total expectation,
\begin{eqnarray*}
   \E[\mr(G)] & = & \E[\mr(G)\mid {\mathcal{E}}]\Pr[{\mathcal{E}}]+\E[\mr(G)\mid {\mathcal{E}}^c]\Pr[{\mathcal{E}}^c] \\
   & \geq & \E[\mr(G) \mid {\mathcal{E}}]\Pr\left[{\mathcal{E}}\right] \\
   & \geq & (r+1)\Pr[\mr(G)>r\mid {\mathcal{E}}]\Pr[{\mathcal{E}}] \\
   & = & (r+1)\left(1-\Pr[\mr(G)\leq r\mid {\mathcal{E}}]\right)\left(1-\Pr[{\mathcal{E}^c}]\right) \\
   & \geq & (r+1)-(r+1)\Pr[\mr(G)\leq r\mid {\mathcal{E}}]-(r+1)\Pr[{\mathcal{E}}^c] \\
   & \geq & (r+1)-v\Pr[\mr(G)\leq r\mid {\mathcal{E}}]-v\Pr[{\mathcal{E}}^c]
\end{eqnarray*}

Theorem~\ref{thm:edges} shows that $v\Pr[{\mathcal{E}}^c]\leq v^{-1}$.  It remains to bound $\Pr[\mr(G)\leq r\mid {\mathcal{E}}]$.

\begin{eqnarray*}
\Pr\left[\mr(G)\leq r\mid{\mathcal{E}}\right]
& = & \sum_{\scriptsize \begin{array}{l} G : v(G)=v, \mr(G)\leq r \\ \left|e(G)-p\binom{v}{2}\right|\leq
v\sqrt{2\ln v}\end{array}}\Pr[G\in G(v,p)] \\
& = & \sum_{\scriptsize \begin{array}{l} G : v(G)=v, \mr(G)\leq r \\ \left|e(G)-p\binom{v}{2}\right|\leq v\sqrt{2\ln v}\end{array}}p^{e(G)}(1-p)^{\binom{v}{2}-e(G)} \\
& = & \sum_{\scriptsize \begin{array}{l} G : v(G)=v, \mr(G)\leq r \\ \left|e(G)-p\binom{v}{2}\right|\leq v\sqrt{2\ln v}\end{array}}\left(\frac{p}{1-p}\right)^{e(G)}(1-p)^{\binom{v}{2}}
\end{eqnarray*}

If $p<1/2$, then we use a lower bound for $e(G)$, given $\mathcal{E}$; if $p>1/2$, an upper bound.  So, we can bound the term inside the summation as
$$ \left(\frac{p}{1-p}\right)^{e(G)}(1-p)^{\binom{v}{2}}\leq\left(\frac{\max\{p,1-p\}}{\min\{p,1-p\}}\right)^{v\sqrt{2\ln v}}\left(p^p(1-p)^{(1-p)}\right)^{\binom{v}{2}} . $$
Hence,
\begin{eqnarray*}
\lefteqn{\Pr\left[\mr(G)\leq r\mid{\mathcal{E}}\right]} \\
& \leq &  \left(\frac{\max\{p,1-p\}}{\min\{p,1-p\}}\right)^{v\sqrt{2\ln v}}\left(p^p(1-p)^{(1-p)}\right)^{\binom{v}{2}}\left|\left\{G : v(G)=v, \left|e(G)-p\binom{v}{2}\right|\leq v\sqrt{2\ln v} , \mr(G)\leq r\right\}\right| .
\end{eqnarray*}

The number of $v$ vertex graphs with between $p\binom{v}{2}-v\sqrt{2\ln v}$ and $p\binom{v}{2}+v\sqrt{2\ln v}$ edges and minimum rank at most $r$ is at most the number of $v\times v$ symmetric pattern matrices obtained as $X^TD_iX, i=0,\dots,r$ with $X$ an $r\x v$ matrix for which the cardinality of the support of the superdiagonal entries is at most $p\binom{v}{2}+v\sqrt{2\ln v}$.   We can apply Theorem~\ref{RBGeq} with $n=rv$, $d=2$ and $s\leq p\binom{v}{2}+v\sqrt{2\ln v}$.  Therefore, because there are $r+1$ diagonal matrices,
\begin{eqnarray*}
\lefteqn{\Pr\left[\mr(G)\leq r\mid{\mathcal{E}}\right]} \\
& \leq &  \left(\frac{\max\{p,1-p\}}{\min\{p,1-p\}}\right)^{v\sqrt{2\ln v}}\left(p^p(1-p)^{(1-p)}\right)^{\binom{v}{2}}(r+1)\binom{rv+2p\binom{v}{2}+2v\sqrt{2\ln v}}{rv} .
\end{eqnarray*}

By  Corollary \ref{bincoeffcor}  in Appendix \ref{sapp}, for fixed $c$ and $p$ with $r=cv$,
$$ \binom{rv+2p\binom{v}{2}+2v\sqrt{2\ln v}}{rv}
\le \left((1+o(1))\left(\frac{(c+p)^{c+p}}{c^cp^p}\right)\right)^{v^2} . $$
Thus
$$ \Pr[\mr(G)\leq cv\mid {\mathcal{E}}]\leq\left((1+o(1))\frac{(c+p)^{2c+2p}}{(c)^{2c}(p)^{2p}}p^p(1-p)^{(1-p)}\right)^{v^2/2} . $$

As long $c<c(p)$ 
and $v$ is sufficiently large, the quantity $v\Pr\left[\mr(G)\leq r\mid {\mathcal{E}}\right]$ is less than 1, giving
$$ \E[\mr(G)]\geq (r+1)-v\Pr[\mr(G)\leq r\mid {\mathcal{E}}]-v\Pr[{\mathcal{E}}^c]>r+1-o(1)\ge r. $$

Furthermore, as long as $c<c(p)$, $ \Pr\left[\mr(G)\leq cv\mid {\mathcal{E}}\right]\to 0$ as $v\to \infty$, and by Theorem \ref{RBGthm}, $\Pr[{\mathcal{E}}^c]\to 0$ as $v\to \infty$.  Since
$$\Pr[\mr(G)\leq cv] \le  \Pr[\mr(G)\leq cv\mid {\mathcal{E}}] + \Pr[{\mathcal{E}}^c],$$
 $\Pr[\mr(G)\le cv]\to 0$ as $v\to\infty$.
\epf

\begin{cor}\label{coravmrlow}  For $v$ sufficiently large, the average minimum rank over all labeled graphs of order $v$   satisfies $$ \amr(v)>0.146907v.$$
Furthermore, if $G$ is chosen at random from all labeled graphs of order $v$,
$\Pr[\mr(G)\leq 0.146907v]\to 0$ as $v\to\infty$.
\end{cor}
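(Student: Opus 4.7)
The plan is to derive the corollary as a direct specialization of Theorem~\ref{newthm} to the case $p = 1/2$. First, I would recall the identity from Section~\ref{sintro} that a graph sampled from $G(v,1/2)$ is uniformly distributed over all labeled graphs of order $v$, so that
$$ \amr(v) = \E\left[\mr(G(v,1/2))\right]. $$
Thus the first assertion of the corollary is exactly the conclusion $\E[\mr(G)] > cv$ of Theorem~\ref{newthm} with $p = 1/2$ and $c = 0.146907$, and the second assertion is the conclusion $\Pr[\mr(G) \le cv] \to 0$ of the same theorem with the same parameters. The whole corollary therefore reduces to verifying that $c(1/2) > 0.146907$.

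To check this numerical inequality, I would substitute $p = 1/2$ into equation~(\ref{eq:c1}). Since $p^p(1-p)^{1-p} = (1/2)^{1/2}(1/2)^{1/2} = 1/2$ and $p^{2p} = 1/2$, the equation collapses to
$$ \frac{(c + 1/2)^{2c+1}}{c^{2c}} = 1. $$
The left-hand side is a strictly increasing function of $c$ for $c > 0$ (as noted just after equation~(\ref{eq:c1})), so $c(1/2)$ is the unique positive root. I would then evaluate the left-hand side at $c_0 = 0.146907$ and show that the result is strictly less than $1$, which forces $c(1/2) > c_0$ by monotonicity. Numerically one obtains $c(1/2) \approx 0.1469077 > 0.146907$, confirming the required strict inequality with a small safety margin.

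With $c_0 < c(1/2)$ in hand, applying the first conclusion of Theorem~\ref{newthm} yields $\amr(v) = \E[\mr(G(v,1/2))] > 0.146907\, v$ for $v$ sufficiently large, and applying the second conclusion yields $\Pr[\mr(G) \le 0.146907\, v] \to 0$ as $v \to \infty$ when $G$ is drawn uniformly from labeled graphs on $v$ vertices. The only real step of substance is the numerical verification of $c(1/2) > 0.146907$; this is routine but should be done carefully, and it is the step most likely to hide a transcription error, so I would double-check the arithmetic by also evaluating the left-hand side of (\ref{eq:c1}) at a slightly larger value of $c$ to confirm that the function does exceed $1$ just above $c_0$.
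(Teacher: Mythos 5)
Your proposal is correct and is essentially the paper's own proof: the paper's argument for this corollary is exactly the one-line specialization of Theorem~\ref{newthm} to $p=1/2$ via the identity $\amr(v)=\E[\mr(G(v,1/2))]$ together with the numerical fact $0.146907<c(1/2)$. Your extra detail on reducing equation~(\ref{eq:c1}) at $p=1/2$ to $(c+1/2)^{2c+1}/c^{2c}=1$ and using monotonicity to certify the numerical bound is a sound (and slightly more careful) elaboration of the same step.
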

\bpf For $p=1/2$, $ \E[\mr(G)]=\amr(v)$ and  $0.146907<c(p)$.
\epf

Note that Corollary~\ref{tightmr} gives the lower bound in Theorem~\ref{main}(\ref{it:main2}).  We note further the lack of symmetry with respect to $p$.  The value $c(p)$ approaches zero as $p$ approaches zero, which is not the case with the upper bound that we describe in the next section.

\section{An upper bound for expected minimum rank}\label{sMdelta}

In this section we show that if $v$ is sufficiently large, then the expected value of $\mr(G(v,p))$ is at most $ (1-p)v+\sqrt{7v\ln v}$.  Thus  the average minimum rank for graphs of order $v$ is at most $0.5v+\sqrt{7v\ln v}$.

Let $\kappa(G)$ denote the \textdef{vertex connectivity} of $G$.  That is, if $G$ is not complete, it is the smallest number $k$ such that there is a set of vertices $S$, with $|S|=k$, for which $G-S$ is disconnected. By convention, $\kappa(K_v)=v-1$.

Following the terminology of \cite{LSS89}, for a graph $G$ an \textdef{orthogonal representation of $G$ of dimension $d$} is a set of vectors in $\R^d$, one corresponding to each vertex, such that if two vertices are nonadjacent, then their corresponding vectors are orthogonal.  Every graph has an orthogonal representation in any dimension by associating the zero vector with every vertex.
A \textdef{faithful orthogonal representation of  $G$ of dimension $d$} is a set of vectors in $\R^d$, one corresponding to each vertex, such that two (distinct) vertices are nonadjacent if and only if their corresponding vectors are orthogonal. Note that in the minimum rank literature, the term {``orthogonal representation"} is customarily used for what is here called a faithful orthogonal representation.

The following result of Lov\'asz, Saks and Schrijver \cite{LSS89} (see also the note on errata, \cite{LSS00} Theorem 1.1) is the basis for an upper bound for minimum rank.

\begin{thm}\label{LSSthm}{\rm \cite[Corollary 1.4]{LSS89}} Every graph $G$ on $v$ vertices has a faithful orthogonal representation of dimension $v-\kappa(G)$.
\end{thm}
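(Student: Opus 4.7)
The plan is to prove the theorem by induction on the number of vertices, after reformulating the claim in terms of positive semidefinite matrices. A faithful orthogonal representation of $G$ of dimension $d$ corresponds bijectively to a positive semidefinite $v\times v$ matrix $M$ of rank at most $d$ whose off-diagonal zero pattern equals the non-edges of $G$ (that is, $M_{ij}=0$ iff $\{i,j\}\notin E(G)$ for $i\ne j$) and whose diagonal entries are all positive; the representing vectors are recovered as the columns of any factor $U$ with $U^TU=M$. The goal thus becomes exhibiting such an $M$ of rank at most $v-\kappa(G)$.

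The base case handles complete graphs: if $G=K_v$ then $\kappa(G)=v-1$ and we need dimension $1$, realized by assigning any nonzero scalar to each vertex. For the inductive step, assume $G$ is not complete with $k=\kappa(G)$, and choose a minimum vertex cut $S$ of size $k$, so that $G-S$ has components with vertex sets $A_1,\ldots,A_t$ ($t\ge 2$). For each $i$, form $G_i=G[A_i\cup S]$; each has strictly fewer than $v$ vertices. Apply induction to obtain faithful orthogonal representations of each $G_i$. The constructive step is to stitch these together: align them on $S$ via orthogonal transformations so the $S$-vectors coincide, then embed the representations in a common ambient space where the subspaces spanned by the $A_i$-vectors, for distinct $i$, are pairwise orthogonal. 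Since vertices in distinct $A_i$'s are non-adjacent in $G$, those orthogonalities are exactly the ones required, while faithfulness inside each $G_i$ is preserved.

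The main obstacle lies in the dimension accounting and in ensuring that the combined representation remains faithful. One must show that, after sharing $k$ dimensions for the $S$-vectors across all $t$ subrepresentations, the total dimension does not exceed $v-k$; this hinges on structural constraints forced by $S$ being a minimum cut. Moreover, the alignment step must not introduce spurious orthogonalities between $S$-vectors and $A_i$-vectors coming from different pieces. The cleanest way to handle both issues is a general position argument: the orthogonal representations of $G$ of dimension $v-\kappa(G)$ form an algebraic variety in $\R^{(v-\kappa(G))v}$, the faithful ones constitute a Zariski-open subset, and one proves nonemptiness of the variety via the explicit stitching construction and then invokes genericity to extract a faithful point. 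Carrying out this genericity argument rigorously, as in the original paper of Lov\'asz, Saks, and Schrijver, is the most technical step.
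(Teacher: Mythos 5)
The paper does not prove this statement: it is imported verbatim from Lov\'asz, Saks and Schrijver \cite[Corollary 1.4]{LSS89} (together with the erratum \cite{LSS00}), so there is no internal argument to compare yours against; your sketch has to stand on its own, and it does not. The inductive decomposition is where it breaks. Writing $k=\kappa(G)$, $S$ a minimum cut and $G_i=G[A_i\cup S]$, the inductive hypothesis hands you a faithful representation of $G_i$ of dimension $|A_i|+k-\kappa(G_i)$, and $\kappa(G_i)$ can be much smaller than $k$. Take $G=K_{2,m}$ with parts $\{a,b\}$ and $\{c_1,\dots,c_m\}$: here $k=2$, $S=\{a,b\}$, each $A_i=\{c_i\}$, and each $G_i$ is the path $a\,c_i\,b$ with $\kappa(G_i)=1$, so induction gives each piece a representation of dimension $2$. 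Placing the pieces in pairwise orthogonal subspaces yields ambient dimension $\sum_i\bigl(|A_i|+k-\kappa(G_i)\bigr)=2m$, far above the target $v-k=m$; to come back down you must make the $S$-parts of the $t$ pieces share dimensions, i.e.\ produce a \emph{single} $k$-tuple of vectors for $S$ compatible with all $t$ pieces at once. That is not an alignment problem solvable by orthogonal transformations (the Gram matrices of the $S$-vectors coming from different pieces are in general different, so they cannot be made to ``coincide''), and summing the pieces' $S$-vectors does not reduce the dimension. This global construction of the $S$-vectors is essentially the whole difficulty of the theorem, and the induction gives no purchase on it.

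The fallback genericity argument is also vacuous as stated. The set of all orthogonal representations of $G$ in $\R^{v-k}$ is indeed a real algebraic variety, and the faithful ones form a relatively open subset, but an open subset of a variety can be empty; nonemptiness of the variety itself is trivial (the all-zero representation is always a point of it) and implies nothing about the faithful locus. Even granting irreducibility, you would still need to exhibit one faithful point, which is exactly what is being proved. The actual proof in \cite{LSS89,LSS00} is quite different: the vectors are chosen sequentially at random, $u_i$ uniform on the unit sphere of the orthogonal complement of the span of the previously chosen non-neighbours (connectivity guarantees this subspace is nonzero), and a delicate conditional-probability argument --- the step corrected in \cite{LSS00} --- shows the result is in general position, hence faithful, with probability $1$. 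Your last sentence defers to that argument, which amounts to citing the theorem rather than proving it.
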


Let $\mr_{+}(G)$ denote the minimum rank among all symmetric positive semidefinite matrices $A$ such that $\G(A)=G$, and let $\M_{+}(G)$ denote the maximum nullity among all such matrices.
 It is well known (and easy to see) that every faithful orthogonal representation of dimension $d$ gives rise to a positive semidefinite matrix of rank $d$ and vice versa.

\begin{cor}\label{cor:kappa} For any graph $G$ on $v$ vertices,
\beq\label{kappa} \mr(G)\leq\mr_{+}(G)\leq v-\kappa(G),
\eeq
or equivalently,
\beq\label{kappaM} \kappa(G)\le\M_{+}(G)\le \M(G).
\eeq
\end{cor}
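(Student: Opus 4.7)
The plan is to prove the two displayed inequalities in \eqref{kappa} directly and then to observe that \eqref{kappaM} follows by complementation via the identity $\mr(G)+\M(G)=v$ (and the analogous identity for the positive semidefinite parameters).

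First I would establish $\mr(G)\le \mr_+(G)$. This is immediate from the definitions: the set of real symmetric positive semidefinite matrices $A$ with $\G(A)=G$ is a subset of the set of all real symmetric matrices $A$ with $\G(A)=G$, so the minimum of $\rank A$ over the smaller set is at least the minimum over the larger set.

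Next I would establish $\mr_+(G)\le v-\kappa(G)$ by invoking Theorem~\ref{LSSthm} together with the correspondence between faithful orthogonal representations and positive semidefinite matrices noted in the paragraph preceding the corollary. Concretely, let $d=v-\kappa(G)$ and let $\bv_1,\ldots,\bv_v\in\R^d$ be a faithful orthogonal representation of $G$ guaranteed by Theorem~\ref{LSSthm}. Form the $v\times v$ Gram matrix $A$ with $a_{ij}=\bv_i^T\bv_j$. Then $A$ is symmetric positive semidefinite with $\rank A\le d$, and by faithfulness $a_{ij}=0$ if and only if $\{i,j\}\notin E(G)$ (for $i\ne j$), so $\G(A)=G$. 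Hence $\mr_+(G)\le \rank A\le d=v-\kappa(G)$.

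Finally, for \eqref{kappaM}, I would use $\mr(G)+\M(G)=v$ (stated in Section~\ref{sintro}) and the analogous identity $\mr_+(G)+\M_+(G)=v$ to rewrite \eqref{kappa} as $v-\M(G)\le v-\M_+(G)\le v-\kappa(G)$, which after negating and adding $v$ becomes $\kappa(G)\le \M_+(G)\le \M(G)$. The only step with any real content is the faithful orthogonal representation step, and its substance has already been handled by Theorem~\ref{LSSthm}; the rest is bookkeeping.
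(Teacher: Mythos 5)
Your proposal is correct and follows essentially the same route as the paper, which states the corollary as an immediate consequence of Theorem~\ref{LSSthm} together with the remark that a faithful orthogonal representation of dimension $d$ corresponds to a positive semidefinite matrix of rank $d$; you have simply written out the Gram-matrix details and the complementation via $\mr(G)+\M(G)=v$ that the paper leaves implicit.
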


Our  proof of the upper bound on the expected value of $\mr(G(v,p))$ uses the bound (\ref{kappa})  and the relationship (on average) between  the connectivity $\kappa(G)$ and the minimum degree $\delta(G)$.
At the AIM workshop \cite{AIM} it was conjectured that for any graph $G$, $\delta(G)\le M(G)$, or equivalently $\mr(G)\le v(G)-\delta(G)$ \cite{AIMrpt}.  The conjecture was proved for bipartite graphs in \cite{delta} but remains open in general.   In \cite{LSS89} it is reported that in 1987, Maehara made a conjecture equivalent to $\mr_{+}(G)\leq v(G)-\delta(G) $, which would imply $\mr(G)\leq v(G)-\delta(G) $.

\begin{thm}\label{avdelta} Let $G$ be distributed according to $G(v,p)$. For $v$ sufficiently large, the expected value of minimum rank satisfies $\E[\mr(G)]\le (1-p)v+\sqrt{7v\ln v}$.

For $v$ sufficiently large, the
  average minimum rank over all labeled graphs of order $v$   satisfies
 $$ \amr(v)\le 0.5v+\sqrt{7v\ln v}.$$
\end{thm}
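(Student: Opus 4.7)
The plan is to combine the structural inequality $\mr(G) \le v - \kappa(G)$ from Corollary~\ref{cor:kappa} with the fact that in $G(v,p)$ for fixed $p \in (0,1)$, both $\delta(G)$ and $\kappa(G)$ are concentrated near $pv$.

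First I would identify a favorable event $\mathcal{F}$ on which $\kappa(G) \ge pv - \sqrt{6v\ln v}$, and verify that $v\Pr[\mathcal{F}^c] = o(\sqrt{v\ln v})$. Theorem~\ref{thm:degree} already gives $\delta(G) \ge pv - \sqrt{6v\ln v}$ with probability at least $1 - 2v^{-2}$; since $\kappa(G) \le \delta(G)$ always, the remaining task is to show that $\kappa(G) = \delta(G)$, or at least that the gap is $o(\sqrt{v\ln v})$, with overwhelming probability. This is a standard random-graph fact proved by a union bound: if $\kappa(G) < \delta(G)$, then there is a vertex cut $S$ with $|S| < \delta(G)$ whose removal splits $G$ into components of size at least $2$ each (a singleton component would force some vertex to have degree $\le |S|$, contradicting the minimum-degree event), and the expected number of such configurations $(S, A, B)$ with $|S| \le pv$ is at most
\[
\sum_{s<pv}\binom{v}{s}\sum_{a=2}^{\lfloor (v-s)/2\rfloor}\binom{v-s}{a}(1-p)^{a(v-s-a)},
\]
which for fixed $p \in (0,1)$ is $o(1)$ because the factor $(1-p)^{|A||B|}$ for $|A|, |B| \ge 2$ decays fast enough to overwhelm the combinatorial counts.

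On $\mathcal{F}$, Corollary~\ref{cor:kappa} gives $\mr(G) \le v - \kappa(G) \le (1-p)v + \sqrt{6v\ln v}$. Off $\mathcal{F}$, the trivial bound $\mr(G) \le v$ suffices. By the law of total expectation,
\[
\E[\mr(G)] \;\le\; (1-p)v + \sqrt{6v\ln v} + v\,\Pr[\mathcal{F}^c] \;\le\; (1-p)v + \sqrt{7v\ln v}
\]
for $v$ sufficiently large, since $\sqrt{7v\ln v} - \sqrt{6v\ln v} = \Theta(\sqrt{v\ln v})$ while $v\,\Pr[\mathcal{F}^c]=o(1)$. Specializing to $p = 1/2$ gives $\amr(v) = \E[\mr(G(v,1/2))] \le 0.5v + \sqrt{7v\ln v}$.

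The main obstacle is establishing the probabilistic lower bound $\Pr[\kappa(G) \ge pv - \sqrt{6v\ln v}] = 1 - o(1)$. The union-bound computation above is delicate, particularly for small $p$ where the combinatorial factor $\binom{v}{s}$ grows at a rate comparable to the decay of $(1-p)^{a(v-s-a)}$; the minimum-degree event must be exploited fully to rule out the $|A|=1$ case, which would otherwise dominate the sum, and the leading case $|A|=2$---where the probability factor is only $(1-p)^{2(v-s-2)}$---requires separate careful attention.
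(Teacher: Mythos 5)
Your overall skeleton is the same as the paper's: apply $\mr(G)\le v-\kappa(G)$ from Corollary~\ref{cor:kappa}, condition on a good event where $\kappa(G)=\delta(G)\ge pv-\sqrt{6v\ln v}$ (using Theorem~\ref{thm:degree}), use the trivial bound $\mr(G)\le v$ off that event, and absorb the error into the slack between $\sqrt{6v\ln v}$ and $\sqrt{7v\ln v}$. That part is fine. The gap is in the one probabilistic ingredient you actually try to prove, namely $\Pr[\kappa(G)<\delta(G)]=o(1)$ (in fact one wants $O(v^{-2})$, as in the paper's Lemma~\ref{connectivity}). The union bound you write down,
$$\sum_{s<pv}\binom{v}{s}\sum_{a=2}^{\lfloor (v-s)/2\rfloor}\binom{v-s}{a}(1-p)^{a(v-s-a)},$$
is \emph{not} $o(1)$: already at $p=1/2$, the term $a=2$, $s=\lfloor v/2\rfloor$ contributes about $\binom{v}{v/2}\binom{v/2}{2}2^{-(v-4)}=\Theta(v^{3/2})$, which diverges. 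The exponential cost $\binom{v}{s}\approx 2^{vH(p)/\ln 2}$ of choosing the separator exactly cancels (and for other $p$ can exceed) the exponential gain $(1-p)^{2(v-s)}$ when $|V_1|=2$, so the factor $(1-p)^{|A||B|}$ does not ``overwhelm the combinatorial counts'' in the small-$|V_1|$ regime. You flagged the $|A|=2$ case as needing care, but the assertion that the sum is $o(1)$ is simply false as stated, and this case is the whole difficulty.

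The repair is the two-regime argument of the paper's Appendix~\ref{sappB}. For $2\le|V_1|\le t$ with $t\approx p(1-p)v$, one does not union over the separator $S$ at all: if $x_1,x_2\in V_1$ then $N[x_1]\cup N[x_2]\subseteq V_1\cup S$ has size at most $t+\delta$, which is far below its expectation $2+(2p-p^2)(v-2)$, so a Chernoff bound plus a union over only $\binom{v}{2}$ pairs of vertices gives probability at most $v^{-2}$. Only for $|V_1|>t$ is the crude count $\binom{v}{i}\binom{v-i}{\delta}(1-p)^{i(v-i-\delta)}$ used, and there $i(v-i-\delta)=\Theta(v^2)$ genuinely overwhelms everything. (One also needs an a priori upper bound $\delta\le pv+2\sqrt{2\ln v}$, obtained from the edge count, to set $t$.) With Lemma~\ref{connectivity} in hand, your law-of-total-expectation step goes through and the theorem follows as you describe.
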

\bpf
In \cite{BT85} (see also section 7.2 of \cite{BolRGT}), Bollob\'as and Thomason prove that if $G$ is distributed according to $G(v,p)$, then $\Pr[\kappa(G)=\delta(G)]\rightarrow 1$ as $v\rightarrow\infty$, without any restriction on $p$.  Lemma~\ref{connectivity} in  Appendix \ref{sappB} shows that for $p$ fixed and $v$ large enough, $\Pr[\kappa(G)<\delta(G)]\leq 3v^{-2}$.  Let $\mathcal{E}$ be the event that $\kappa(G)=\delta(G)$ and $\delta(G)\geq pv-\sqrt{6v\ln v}$.  For $G$ distributed according to $G(v,p)$, the law of total expectation gives
\begin{eqnarray*}
   \E[\kappa(G)] & = & \E[\kappa(G)\mid {\mathcal{E}}]\Pr[{\mathcal{E}}]+\E[\kappa(G)\mid {\mathcal{E}}^c]\Pr[{\mathcal{E}}^c] \\
   & \geq & \left(pv-\sqrt{6v\ln v}\right)\left(1-\Pr[{\mathcal{E}}^c]\right) \\
   & \geq & pv-\sqrt{6v\ln v}-v\left(\Pr[\delta(G)<pv-\sqrt{6v\ln v}]+\Pr[\kappa(G)<\delta(G)]\right).
\end{eqnarray*}

We use Theorem \ref{thm:degree} and the result that $v\Pr[\kappa(G)<\delta(G)]\leq 3v^{-1}$ to see that $$ \E\left[\kappa(G)\right]\geq pv-\sqrt{6v\ln v}-2v^{-1}-3v^{-1}\ge pv-\sqrt{7v\ln v} , $$
for $v$ sufficiently large.  By (\ref{kappa}), $\E[\mr(G)]\leq (1-p)v+\sqrt{7v\ln v}$.
\epf

Theorem \ref{avdelta} gives the upper bound in Theorem~\ref{main}(\ref{it:main2}).  Note that Theorem \ref{avdelta} actually establishes $\E[\mr_+(G)]\le (1-p)v+\sqrt{7v\ln v}$.  Since $\mr(G)\le\mr_+(G)$ for any graph $G$, the lower bound in Theorem \ref{newthm} in Section \ref{sv9} certainly bounds $\E[\mr_+(G)]$ from below.  


\section{Bounds for  $\nu(G)$ and $\xi(G)$}\label{sxi}

 In this section we discuss the the Colin de Verdi\`ere type parameters $\nu(G)$ and $\xi(G)$, and establish an upper bound on $\xi(G)$ in terms of the number of edges of the graph.    This upper bound, and a known lower bound for $\nu(G)$, have implications for the average value of $\nu$ and $\xi$ (see Section \ref{snew}).

In 1990 Colin de Verdi\`ere
(\cite{CdV} in English) introduced the graph parameter $\mu$ that is equal to the
maximum multiplicity of eigenvalue 0 among all matrices satisfying several conditions including the  Strong Arnold
Hypothesis (defined below).  
The parameter $\mu$, which is used to characterize planarity, is the  first of
several parameters that require the Strong Arnold Hypothesis and bound the
maximum nullity from below  (called {\em Colin de Verdi\`ere type parameters}).  All the Colin de Verdi\`ere type parameters we discuss have been shown to be  minor monotone.

 A {\em contraction} of  $G$ is obtained by identifying two adjacent vertices
of
$G$, deleting any loops  that arise in this process, and replacing any multiple edges by a single edge.
 A {\em minor}
of $G$ arises by performing a  sequence of deletions of edges, deletions of isolated
vertices, and/or contractions of edges.
 A graph parameter $\beta$ is {\em minor monotone} if for any minor $G'$  of $G$, $\beta(G') \le \beta(G)$.

A symmetric real matrix $M$ is said to satisfy the {\em Strong Arnold
Hypothesis (SAH)} provided there does {not} exist a nonzero real symmetric matrix
$X$ satisfying
$AX = 0$,
 $A\circ X = 0$, and
 $I\circ X=0$,
where $\circ$ denotes the Hadamard (entrywise) product and $I$ is the identity matrix.

The SAH is equivalent to the requirement that certain manifolds intersect transversally.  Specifically, for $A=[a_{ij}]\in\Rsn$ let   $$\RR_A=\{B\in\Rsn : \rank B = \rank A\},$$ and
 $$\sym_A = \{B\in\Rsn : \G(B)=\G(A)\} .$$
 Then   $\RR_A$ and $\sym_A$  intersect transversally  at $A$ if and only if  $A$ satisfies the SAH (see \cite{HLS}).

Another minor monotone parameter,  introduced by Colin de Verdi\`ere
in \cite{CdVnu}, is denoted by $\nu(G)$ and defined to be the maximum nullity
among matrices $A$ that satisfy:
\begin{enumerate}
\item  $\G(A )=G$;
\item $A$ is positive semidefinite;
\item $A$ satisfies the Strong Arnold Hypothesis.
\end{enumerate}
Clearly $\nu(G)\le \M_+(G)$.

 The parameter $\xi(G)$  was introduced in \cite{BFH3} as a Colin de Verdi\`ere type parameter intended for use in computing maximum nullity and minimum rank, by removing any unnecessary restrictions while preserving minor monotonicity. 
 Define $\xi(G)$ to be the maximum multiplicity of 0 as an eigenvalue
among matrices $A\in\Rsn$ that satisfy:
\begin{itemize}
\item  $\G(A )=G$.
\item $A$ satisfies the Strong Arnold Hypothesis.
\end{itemize}
  Clearly, $\nu(G) \le \xi(G)\le \M(G)$. 
  The following lower bound on $\nu(G)$ has been established by van der Holst using the results of Lov\'asz, Saks and Schrijver.

  \begin{thm}\label{kappanu}{\rm \cite[Theorem 4]{vdH08}}  For every graph $G$,
  $$\kappa(G)\le\nu(G)\le\xi(G).$$
  \end{thm}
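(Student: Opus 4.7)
The plan is to establish the two inequalities separately; the first is essentially immediate while the second is the substantive one.

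For $\nu(G) \le \xi(G)$: both parameters are defined as the maximum nullity of a matrix $A \in \Rsn$ satisfying $\G(A) = G$ together with the Strong Arnold Hypothesis, but $\nu$ imposes the additional restriction that $A$ be positive semidefinite. The class of admissible matrices for $\nu(G)$ is therefore a subclass of that for $\xi(G)$, and the inequality follows at once.

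For $\kappa(G) \le \nu(G)$ the goal is to exhibit a positive semidefinite matrix $A$ with $\G(A) = G$, $\nul A \ge \kappa(G)$, and satisfying SAH. I would start from Theorem \ref{LSSthm}: it furnishes a faithful orthogonal representation $u_1, \ldots, u_v$ of $G$ in $\R^d$ with $d = v - \kappa(G)$. Forming the $d \times v$ matrix $U$ with columns $u_i$ and setting $A = U^T U$ yields a positive semidefinite matrix with $\G(A) = G$ and $\rank A = d$, hence $\nul A = \kappa(G)$. This is enough to recover Corollary \ref{cor:kappa}; the new content is SAH.

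The key extra ingredient, present in the Lov\'asz--Saks--Schrijver construction but not needed for Corollary \ref{cor:kappa}, is that the representation may be taken in \emph{general position}, meaning any $d$ of the vectors $u_1,\ldots,u_v$ are linearly independent. Granted this, I would verify SAH directly: suppose $X \in \Rsn$ satisfies $A X = 0$, $A \circ X = 0$, and $I \circ X = 0$. The last two conditions confine the support of $X$ to pairs $(i,j)$ with $i \ne j$ and $ij \notin E(G)$. From $A X = U^T U X = 0$ and $\rank U = d$ we obtain $U X = 0$, i.e., for each row index $i$ the vectors $\{u_j : j \ne i,\ ij \notin E(G)\}$ satisfy a linear relation with coefficients $X_{ij}$. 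Since $\deg(i) \ge \delta(G) \ge \kappa(G)$, the number of such non-neighbors is at most $v - 1 - \kappa(G) = d - 1 < d$; the general-position hypothesis then forces the $u_j$ appearing in the relation to be linearly independent, so each $X_{ij} = 0$, and hence $X = 0$.

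The main obstacle is securing the general-position refinement of Theorem \ref{LSSthm}, which needs to be extracted from the LSS construction rather than from its stated corollary. Once that is in hand, the SAH verification via the elementary degree bound $\delta(G) \ge \kappa(G)$ is a short linear-algebra step, and combining the two inequalities completes the proof.
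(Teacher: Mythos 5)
The paper does not prove this statement; it is quoted from van der Holst \cite[Theorem 4]{vdH08}, so there is no internal proof to compare against. Your argument is correct and is essentially the known proof: $\nu(G)\le\xi(G)$ is immediate from the definitions, and for $\kappa(G)\le\nu(G)$ the Gram matrix $A=U^TU$ of a \emph{general-position} faithful orthogonal representation in dimension $d=v-\kappa(G)$ is positive semidefinite with $\G(A)=G$ and $\nul A=\kappa(G)$, and your SAH check (each column of $X$ is supported on at most $v-1-\deg(j)\le d-1$ non-neighbors, whose representing vectors are independent by general position) is sound. The one external ingredient you flag, the general-position strengthening of Theorem \ref{LSSthm}, is indeed available: it is the main result of Lov\'asz, Saks and Schrijver (Theorem 1.1 in the corrected version \cite{LSS00}), of which the paper's Theorem \ref{LSSthm} is only a corollary, so there is no gap.
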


The following bound on the Colin de Verdi\`ere number $\mu$ in terms of the number of edges $e(G)$ is given in  \cite{P} for any connected graph $G\ne K_{3,3}:$
$$e(G) \ge  \frac {\mu(G)(\mu(G)+1)} 2.$$
We will show that for any connected graph $G$,
$$e(G) + b\ge \frac {\xi(G)(\xi(G)+1)} 2$$
where $b=1$ if $G$ is bipartite and $b=0$ otherwise.

For a manifold $\MM$ and matrix $A\in\MM$, let $\TT_{\MM_A}$ be the tangent space in $\Rsn$ to $\MM$ at $A$
 and let $\NN_{\MM_A}$ be the normal (orthogonal complement) to $\TT_{\MM_A}$.

\begin{obs}\label{obs1}{\rm  \cite[p. 9]{HLS}} $\null$
\ben
\item $\TSA=\{B: \forall i\ne j, a_{ij}=0\Rightarrow b_{ij}=0\}$.
\item\label{c2} $\NSA=\{ X: \forall i, x_{ii}=0 \mbox{ and } \forall i\ne j, a_{ij}\ne 0\Rightarrow x_{ij}=0\}$.
\item\label{c1} $\TRA=\{WA+AW^T: W\in\Rnn \}=\{B\in\Rsn: \bv^TB\bv=0~ \forall \bv\in\ker A\}$.
\item\label{c3} $\NRA=\Span(\{\bv\bv^T: \bv\in\ker A\})=\{X\in \Rsn:AX = 0\}$.
\een
\end{obs}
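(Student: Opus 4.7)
The plan is to handle items 1 and 2 by exploiting the fact that $\sym_A$ is locally a linear subspace, and to handle items 3 and 4 by identifying $\RR_A$ locally with a congruence orbit of $GL_n(\R)$ and doing elementary computations in the Frobenius inner product $\langle X,Y\rangle=\tr(XY)$.

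For items 1 and 2, the conditions $b_{ij}\ne 0$ for $a_{ij}\ne 0$ are open while the conditions $b_{ij}=0$ for $a_{ij}=0$ are linear, so $\sym_A$ is an open subset of the linear subspace $L=\{B\in\Rsn : b_{ij}=0 \text{ whenever } i\ne j \text{ and } a_{ij}=0\}$. Hence $\TSA=L$, giving item 1. For item 2, $X\in\NSA$ iff $\tr(XB)=0$ for every $B\in L$; since the entries $b_{ii}$ and those $b_{ij}$ with $a_{ij}\ne 0$ (and $i<j$) are unconstrained in $L$, this forces exactly $x_{ii}=0$ for all $i$ and $x_{ij}=0$ for every edge of $\G(A)$.

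For item 3, continuity of eigenvalues implies that matrices of rank $r=\rank A$ near $A$ have the same signature as $A$, so locally $\RR_A$ coincides with the congruence orbit $\{PAP^T:P\in GL_n(\R)\}$. Differentiating the orbit map at $P=I$ gives $\TRA=\{WA+AW^T : W\in\Rnn\}$, the first equality. The inclusion $\TRA\subseteq\{B\in\Rsn:\bv^TB\bv=0\;\forall\bv\in\ker A\}$ is immediate, since for $B=WA+AW^T$ and $A\bv=0$ we have $\bv^TB\bv=\bv^TWA\bv+(A\bv)^TW^T\bv=0$. Equality follows by a dimension count: in a basis where $\ker A$ is spanned by the last $n-r$ standard vectors, the condition $\bv^TB\bv=0$ on $\ker A$ annihilates precisely the lower-right $(n-r)\times(n-r)$ block of $B$, yielding dimension $\binom{r+1}{2}+r(n-r)=rn-\binom{r}{2}$, which matches the standard dimension of the manifold of symmetric rank-$r$ matrices of fixed signature.

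For item 4, $\NRA$ is the Frobenius-orthogonal complement of $\TRA$. Using cyclicity of trace and symmetry of $A$ and $X$,
$$\tr\bigl((WA+AW^T)X\bigr)=\tr(WAX)+\tr(AW^TX)=2\tr(AXW),$$
which vanishes for all $W$ iff $AX=0$. For the span description, any symmetric $X$ with $AX=0$ has $\range X\subseteq\ker A$, so its spectral decomposition $X=\sum\lambda_i\bu_i\bu_i^T$ shows $X\in\Span(\{\bv\bv^T:\bv\in\ker A\})$; the reverse inclusion is obvious since $A\bv=0$ implies $A\bv\bv^T=0$. The main obstacle is the dimension matching in item 3: one must either cite the standard formula $\dim\RR_A=rn-\binom{r}{2}$ or verify it directly by exhibiting local charts of the congruence orbit. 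Everything else reduces to routine calculations with the Frobenius inner product and spectral theory.
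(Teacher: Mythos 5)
The paper offers no proof of this observation at all---it is quoted from \cite[p.~9]{HLS}---so your argument is being compared against a citation rather than an in-text proof; what you have written is a correct, essentially self-contained verification. Items 1, 2 and 4 are clean: $\sym_A$ is indeed open in the coordinate subspace $L$, the normal space computation via $\tr(XB)=\sum_i x_{ii}b_{ii}+2\sum_{i<j}x_{ij}b_{ij}$ is right, and the identity $\tr\bigl((WA+AW^T)X\bigr)=2\tr(AXW)$ together with $\range X\subseteq\ker A$ gives item 4 exactly. The only place you lean on something unproved is the dimension count in item 3, where you invoke $\dim\RR_A=rn-\binom{r}{2}$ as a ``standard formula'' (it is; it also agrees with the paper's own Corollary \ref{dimT}). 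You can close this gap at no cost and simultaneously get both equalities of item 3 without any appeal to the manifold structure of $\RR_A$: orthogonally diagonalize so that $A=\diag(D,0)$ with $D$ an invertible $r\times r$ block, and compute
$$WA+AW^T=\begin{bmatrix} W_{11}D+DW_{11}^T & DW_{21}^T\\ W_{21}D & 0\end{bmatrix},$$
where $W_{11}D+DW_{11}^T$ ranges over all symmetric $r\times r$ matrices (take $W_{11}=\tfrac12 SD^{-1}$) and $W_{21}D$ over all $(n-r)\times r$ matrices. Hence $\{WA+AW^T\}$ is exactly the set of symmetric $B$ with vanishing lower-right block, which by polarization is exactly $\{B\in\Rsn:\bv^TB\bv=0\ \forall\bv\in\ker A\}$; combined with your observation that every $WA+AW^T$ is tangent to $\RR_A$ (via the curve $t\mapsto e^{tW}Ae^{tW^T}$) and your inertia argument that nearby rank-$r$ matrices form the congruence orbit, this pins down $\TRA$ completely. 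With that substitution the proof is complete and entirely elementary.
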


 Clearly $\dim \TSA = e(G)+v$.  These  observations can also be used to provide the exact dimension of $\NRA$ and thus of  $\TRA$.

\begin{prop}\label{dimN} Let $A\in\Rsn$ 
and let  $\bu_1,\dots,\bu_q$ be an orthonormal basis for $\ker A$.  Then $U=\{\bu_i\bu_i^T:1\le i\le q\}\cup \{\bu_i\bu_j^T+\bu_j\bu_i^T:1\le i<j\le q\}$ is a basis for $ \Span(\{\bv\bv^T: \bv\in\ker A\}).$  Thus $\dim \NRA=\frac {q(q+1)} 2$.
\end{prop}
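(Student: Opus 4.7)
The plan is to establish three things: that $U$ is contained in the span, that $U$ spans, and that $U$ is linearly independent; the count $|U| = q + \binom{q}{2} = \frac{q(q+1)}{2}$ then gives the dimension formula.

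First I would check $U \subseteq \Span(\{\bv\bv^T : \bv \in \ker A\})$. The matrices $\bu_i\bu_i^T$ are immediately of the required form. For the symmetric cross terms, I would invoke the polarization identity
\[
\bu_i\bu_j^T + \bu_j\bu_i^T = (\bu_i+\bu_j)(\bu_i+\bu_j)^T - \bu_i\bu_i^T - \bu_j\bu_j^T,
\]
using that $\bu_i+\bu_j \in \ker A$ because $\ker A$ is a subspace.

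Next, to see that $U$ spans the whole set $\{\bv\bv^T : \bv \in \ker A\}$, I would expand an arbitrary $\bv \in \ker A$ as $\bv = \sum_{i=1}^q c_i \bu_i$ and compute
\[
\bv\bv^T = \sum_{i=1}^q c_i^2\, \bu_i\bu_i^T + \sum_{1\le i<j\le q} c_ic_j(\bu_i\bu_j^T + \bu_j\bu_i^T),
\]
which exhibits $\bv\bv^T$ as a linear combination of elements of $U$.

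The main step is linear independence. Suppose $\sum_i a_i\, \bu_i\bu_i^T + \sum_{i<j} b_{ij}(\bu_i\bu_j^T+\bu_j\bu_i^T) = 0$. Applying this matrix identity to the basis vector $\bu_k$ and using orthonormality $\bu_i^T\bu_k = \delta_{ik}$ collapses the sum to
\[
a_k\bu_k + \sum_{i<k} b_{ik}\bu_i + \sum_{j>k} b_{kj}\bu_j = 0.
\]
Since $\{\bu_1,\dots,\bu_q\}$ is linearly independent, every coefficient vanishes; letting $k$ range over $1,\ldots,q$ forces all $a_k = 0$ and all $b_{ij} = 0$. This is the only delicate step, and the key insight is that testing against the orthonormal basis vectors isolates the coefficients cleanly. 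Combining the three steps with Observation~\ref{obs1}(\ref{c3}), which identifies $\NRA$ with $\Span(\{\bv\bv^T: \bv \in \ker A\})$, yields $\dim \NRA = |U| = \frac{q(q+1)}{2}$.
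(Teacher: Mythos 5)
Your proposal is correct and follows essentially the same route as the paper: the same polarization identity for containment, the same expansion of $\bv\bv^T$ in the orthonormal basis for spanning, and linear independence by testing against the basis vectors (the paper sandwiches the matrix as $\bu_\ell^T Y \bu_k$ to isolate each coefficient directly, whereas you apply $Y$ to $\bu_k$ on one side and then invoke linear independence of the $\bu_i$ --- a trivial variation).
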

\bpf  Let $\NN=\Span(\{\bv\bv^T: \bv\in\ker A\})$.  Since $\bu_i\bu_j^T+\bu_j\bu_i^T=(\bu_i+\bu_j)(\bu_i+\bu_j)^T-\bu_i\bu_i^T-\bu_j\bu_j^T$, $U\subset \NN.$

Show $U$ spans $\NN$:
$$\left(\sum_{i=1}^q s_i\bu_i\right)\left(\sum_{j=1}^q s_j\bu_i\right)^T=\sum_{i=1}^q\sum_{j=1}^q s_is_j\bu_i\bu_j^T=\sum_{i=1}^q s_i^2\bu_i\bu_i^T+\sum_{1\le i<j\le q} s_is_j(\bu_i\bu_j^T+\bu_j\bu_i^T)$$

Show $U$ is linearly independent: Let  $Y=\sum_{i=1}^q s_i\bu_i\bu_i^T+\sum_{1\le i<j\le q} s_{ij}(\bu_i\bu_j^T+\bu_j\bu_i^T)$ and suppose $Y=0.$  For any $k$, $0=\bu_k^TY\bu_k=s_k$ and for $\ell<k$, $0=\bu_\ell Y\bu_k=s_{\ell k}$, and $U$ is linearly independent.
\epf

\begin{cor}\label{dimT}  $\dim \TRA=v\rank A-\frac {\rank A(\rank A-1)} 2$.
\end{cor}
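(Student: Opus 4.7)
The plan is to use the fact that $\TRA$ and $\NRA$ are orthogonal complements in the ambient space $\Rsn$ of real symmetric $v \times v$ matrices, so their dimensions sum to $\dim \Rsn = \frac{v(v+1)}{2}$. Since Proposition~\ref{dimN} has just supplied $\dim \NRA = \frac{q(q+1)}{2}$ where $q = \nul A = v - \rank A$, the dimension of $\TRA$ follows by subtraction.

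Concretely, writing $r = \rank A$ so that $q = v - r$, I would compute
\begin{align*}
\dim \TRA &= \frac{v(v+1)}{2} - \frac{(v-r)(v-r+1)}{2} \\
&= \frac{v^2 + v - (v-r)^2 - (v-r)}{2} \\
&= \frac{2vr - r^2 + r}{2} \\
&= v\,\rank A - \frac{\rank A(\rank A - 1)}{2},
\end{align*}
which is the desired equality.

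There is essentially no obstacle here, since $\NRA$ is defined as the orthogonal complement of $\TRA$ in $\Rsn$, and Proposition~\ref{dimN} does all the substantive work. The only step worth noting is the arithmetic simplification above, which is routine.
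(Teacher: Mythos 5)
Your proof is correct and follows exactly the paper's own argument: both subtract $\dim \NRA = \frac{(v-r)(v-r+1)}{2}$ (from Proposition~\ref{dimN}) from $\dim \Rsn = \frac{v(v+1)}{2}$, using that $\NRA$ is the orthogonal complement of $\TRA$, and then simplify. The arithmetic checks out.
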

\bpf  Let $\rank A=r$. By  Observation \ref{obs1} and Proposition \ref{dimN},
$$\dim\TRA=\dim\Rsn-\dim \NRA = \frac {v(v+1)} 2-\frac{(v-r)(v-r+1)} 2. \qedhere$$
\epf

 An {\em optimal matrix for $\xi(G)$} is a matrix $A$ such that $\G(A)=G, \nul A=\xi(A)$, and $A$ has the Strong Arnold Hypothesis.
\begin{thm}\label{thm:xiubd} Let $G$ be a connected graph.
\beq\label{eq0}e(G) + b\ge \frac {\xi(G)(\xi(G)+1)} 2\eeq
where $b=1$ if $G$ is bipartite and every optimal matrix for $\xi(G)$ has zero diagonal, and $b=0$ otherwise.
\end{thm}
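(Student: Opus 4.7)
The plan is to deduce \eqref{eq0} from the transversality formulation of the Strong Arnold Hypothesis. Fix an optimal matrix $A$ for $\xi(G)$, so $\G(A)=G$, $q:=\nul A=\xi(G)$, and $A$ satisfies the SAH; equivalently, $\TRA+\TSA=\Rsn$. Combining this with $\dim\TSA=e(G)+v$ and the formula $\dim\TRA=v(v-q)-\binom{v-q}{2}$ from Corollary~\ref{dimT}, inclusion-exclusion yields after a short calculation
$$\dim(\TRA\cap\TSA)=e(G)+v-\binom{q+1}{2}.$$
The trivial bound $\dim(\TRA\cap\TSA)\ge 0$ only yields $e(G)\ge\binom{q+1}{2}-v$, which is too weak. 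I plan instead to exhibit a canonical subspace $T\subseteq\TRA\cap\TSA$ of dimension $v$ (or $v-1$ in an exceptional case), which substituted into the display above immediately gives the target inequality.

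The subspace $T$ is the tangent space at $A$ to the orbit of $A$ under diagonal conjugation $A\mapsto DAD$; explicitly, $T:=\{EA+AE : E\text{ diagonal}\}$. The $(i,j)$-entry of $EA+AE$ is $(e_i+e_j)a_{ij}$, so $EA+AE\in\TSA$ by Observation~\ref{obs1}, and taking $W=E$ in the description $\TRA=\{WA+AW^T:W\in\Rnn\}$ also shows $EA+AE\in\TRA$. Hence $T\subseteq\TRA\cap\TSA$, and by rank-nullity $\dim T=v-\dim\ker\Phi$, where $\Phi\colon E\mapsto EA+AE$ is viewed as a linear map from the $v$-dimensional space of diagonal matrices into $\Rsn$.

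The kernel of $\Phi$ splits into cases using the connectedness of $G$: the diagonal-entry condition is $2e_ia_{ii}=0$, and the edge condition for $\{i,j\}\in E(G)$ is $e_i+e_j=0$. If $a_{ii}\ne 0$ for some $i$, then $e_i=0$, and the relations $e_j=-e_i$ across edges propagate along $G$ to force $e\equiv 0$ by connectedness. If the diagonal of $A$ vanishes, the surviving edge condition $e_i=-e_j$ has a nontrivial solution precisely when $G$ is bipartite, in which case the kernel is one-dimensional, spanned by the $\pm 1$ indicator of the bipartition. Thus $\dim T=v$ unless $G$ is bipartite and $A$ has zero diagonal, in which case $\dim T=v-1$.

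To conclude, if $b=0$ then either $G$ is non-bipartite, or $G$ is bipartite and some optimal matrix has a nonzero diagonal entry; selecting $A$ accordingly gives $\dim T=v$ and hence $e(G)\ge\binom{q+1}{2}$. If $b=1$, then every optimal $A$ has zero diagonal and $G$ is bipartite, so $\dim T=v-1$ and $e(G)\ge\binom{q+1}{2}-1$. Either way $e(G)+b\ge\binom{\xi(G)+1}{2}$, which is \eqref{eq0}. The step I expect to be most delicate is the kernel computation in the bipartite zero-diagonal case: verifying that $\dim\ker\Phi$ is exactly $1$ (no larger) relies essentially on the connectedness of $G$, which forces the bipartition to be unique up to swapping the two parts.
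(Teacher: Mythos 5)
Your proposal is correct and follows essentially the same route as the paper: the same dimension count from the transversality form of the SAH, and the same subspace of $\TRA\cap\TSA$ (your image of $\Phi\colon E\mapsto EA+AE$ is exactly the span of the paper's matrices $B_k=D_kA+AD_k$, and your kernel computation is the paper's linear-independence argument restated via rank--nullity). The case analysis on a nonzero diagonal entry versus bipartiteness, and the choice of $A$ with a nonzero diagonal entry when one exists, also match the paper's proof.
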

\bpf  Let $A$ be an optimal matrix for $\xi(G)$, chosen to have at least one nonzero diagonal entry if there is such an optimal matrix.  Let $\rank A=r$.

The Strong Arnold Hypothesis for $A$ is $ \NRA\cap \NSA =\{0\}  $, which is equivalent by taking orthogonal complements to
$$
\TRA+ \TSA = \Rsn
$$
Therefore
\bea\dim \TRA + \dim \TSA-\dim (\TRA\cap\TSA) &= & \dim \Rsn\\
 vr-\frac {r(r-1)} 2 + e(G) + v-\dim (\TRA\cap\TSA) &= & \frac {v(v+1)} 2\eea
\bea
e(G) & = & \frac {v(v+1)} 2 - vr+\frac {r(r-1)} 2- v + \dim (\TRA\cap\TSA) \\
& = & \frac 1 2((v-r)^2 +(v-r)) - v + \dim (\TRA\cap\TSA) \\
 & = & \frac{\xi(G)(\xi(G)+1)} 2 - v + \dim (\TRA\cap\TSA)
\eea
Thus
$$\frac{\xi(G)(\xi(G)+1)} 2 = e(G) + v - \dim (\TRA\cap\TSA).$$

Let $D=\diag(d_1,\dots,d_n)$ be a  diagonal matrix.  Then by Observation \ref{obs1}.\ref{c1},
$DA+AD\in\TRA$. Clearly,  $DA+AD\in\TSA$, so $DA+AD\in \TRA\cap\TSA$.  Let $\be_k$ be the $k$th standard basis  vector of $\Rn$.  Define $D_k=\diag(\be_k)$ and  $B_k=D_kA+AD_k$.
Note that 
$(B_k)_{ij}=(\delta_{ki}+\delta_{kj}) a_{ij}$, where $\delta_{ii}=1$ and $\delta_{ij}=0$ for $i\ne j$.

  We show first that if $\displaystyle \sum_{k=1}^{v}c_k B_k=0 \mbox{ and } c_t=0 \mbox{ for some } t \mbox{ such that } 1\le t\le v,$
 then $c_k=0$ for all  $ 1\le k\le v$.
 For every neighbor $y$ of $t$,
$$ 0=\left(\sum_{k=1}^{v}c_k B_k\right)_{ty}=\sum_{k=1}^{v}c_k (\delta_{kt}+\delta_{ky}) a_{ty}=c_y a_{ty}.$$
Since $\{t,y\}$ is an edge of $G$,  $a_{ty}\ne 0$, and so $c_y=0$.
Since $G$ is connected, every vertex can be reached by a path from $t$, and so
$c_1=\dots=c_{v}=0$.

Since   $$\sum_{k=1}^{v-1}c_k B_k = \sum_{k=1}^{v}c_k B_k \mbox{ with } c_v=0,$$ it follows that for every graph $G$ and $\xi(G)$-optimal matrix $A$ (without any assumption about the diagonal),
the matrices $B_k, k=1,\dots,v-1,$ are linearly independent, and thus
  $$\dim (\TRA\cap\TSA)\ge v-1\qquad\mbox{  and  }\qquad \frac{\xi(G)(\xi(G)+1)} 2 \le e(G) +1,$$

Now suppose that $A$ has a nonzero diagonal entry or $G$ is not bipartite.  We show that 
the matrices $B_k, k=1,\dots,v$  are linearly independent, so   $$\dim (\TRA\cap\TSA)\ge v\qquad\mbox{  and  }\qquad \frac{\xi(G)(\xi(G)+1)} 2 \le e(G)$$

Let $$\sum_{k=1}^vc_k B_k=0.$$
 If $A$ has a nonzero diagonal entry $a_{tt}$, then
 $ 0=\left(\sum_{k=1}^{v}c_k B_k\right)_{tt} 
 =2c_t a_{tt}$, and so $c_t=0$.
If $G$ is not bipartite, there is an odd cycle; without loss of generality let this odd cycle be $(1,\dots,t)$.   Then for $i=1,\dots, t-1$,
$$ 0=\left(\sum_{k=1}^{v}c_k B_k\right)_{i,i+1} 
=(c_i+c_{i+1}) a_{i,i+1};$$
similarly $0=(c_t+c_{1}) a_{t,1}$.  Since $  \{t,1\}$ and $\{i,i+1\}, i=1,\dots,t-1$ are edges of $G$,
$$ c_i+c_{i+1}=0,i=1,\dots, t-1, \mbox{ and } c_t+c_{1}=0.$$
 By adding equations $(-1)^{i} (c_i+c_{i+1}=0), i=1,\dots,t-1$ to $c_t+c_{1}=0$, we obtain
$ 2 c_t=0.$
 \epf

If $G$ is the disjoint union of its connected components $G_1,\dots,G_h$, then $\xi(G)=\max_{i=1,\dots,h}\{\xi(G_i)\}$ \cite{BFH3}.
\begin{cor}\label{cor:xiubd} For every graph $G$,
$$  \frac {\xi(G)(\xi(G)+1)} 2 \le e(G) + 1.$$
\end{cor}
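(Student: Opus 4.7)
The plan is to reduce the disconnected case to the connected case already handled by Theorem \ref{thm:xiubd}, using the component formula $\xi(G) = \max_{i} \xi(G_i)$ recalled just before the corollary.

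First, observe that Theorem \ref{thm:xiubd} applied to a connected graph $H$ already gives the desired bound: regardless of the value of $b \in \{0,1\}$, we have
\begin{equation*}
\frac{\xi(H)(\xi(H)+1)}{2} \le e(H) + b \le e(H) + 1.
\end{equation*}
So the corollary holds whenever $G$ is connected.

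For a general graph $G$, let $G_1,\ldots,G_h$ be the connected components of $G$, and choose an index $j$ with $\xi(G_j) = \max_{i} \xi(G_i) = \xi(G)$, using the additivity-by-components fact $\xi(G) = \max_i \xi(G_i)$ cited above. Applying the connected case to $G_j$ yields
\begin{equation*}
\frac{\xi(G)(\xi(G)+1)}{2} = \frac{\xi(G_j)(\xi(G_j)+1)}{2} \le e(G_j) + 1 \le e(G) + 1,
\end{equation*}
since $e(G) = \sum_i e(G_i) \ge e(G_j)$. This completes the proof.

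There is essentially no obstacle: the work was done in Theorem \ref{thm:xiubd}, and the only thing to verify is that passing to the best component does not cost anything. The minor subtlety is that a component might be a single isolated vertex, but in that case $e(G_j) = 0$ and the bound $\frac{\xi(G)(\xi(G)+1)}{2} \le 1$ is still consistent with the inequality we derive.
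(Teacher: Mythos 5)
Your proof is correct and follows the same route the paper intends: the paper leaves the corollary's proof implicit, stating only the component formula $\xi(G)=\max_i\xi(G_i)$ just before it, and the argument is exactly your reduction to Theorem \ref{thm:xiubd} on the component attaining the maximum, together with $e(G_j)\le e(G)$ and $b\le 1$. Your remark about isolated vertices is a reasonable sanity check but requires nothing beyond applying the connected case to $K_1$.
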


\begin{ex}{\rm  The complete bipartite graph $K_{3,3}$ demonstrates that $b=1$ is sometimes necessary in the bound (\ref{eq0}), because $\xi(K_{3,3})=4$, so $\frac{\xi(G)(\xi(G)+1)} 2=10$, and  $e(K_{3,3}) = 9$.
}\end{ex}

\section{Bounds for the expected value of $\xi$}\label{snew}

In this section we show that if $v$ is sufficiently large, then the expected value of $\xi(G(v,p))$ is asymptotically at most $\sqrt p v$.
It follows that the average value of $\xi$ for graphs of order $v$ is asymptotically at most $\frac 1 {\sqrt 2} v$.

We will make the notion of asymptotic expected value more precise, both for minimum rank and for~$\xi$.

Define  
$$\emr(p)=\limsup_{v\to\infty} \frac{\E\left[\mr(G(v,p))\right]} v.$$
(This is a careful definition, as the lim sup is almost certainly a limit.)
In previous sections we have shown that for $0<p<1$,
$$c(p)\le\emr(p)\le 1-p.$$

Now define $$\exi(p)=\liminf_{v\to\infty} \frac{\E\left[\xi(G(v,p))\right]} v.$$
The quantity $\exi(p)$ should be compared to $1-\emr(p)$ rather than $\emr(p)$, since $\xi(G)$ measures a nullity rather than a rank.

Our starting point is an immediate consequence of  Corollary \ref{cor:xiubd}.
\begin{cor} For every graph $G$,
 \bean \xi(G) & \le & \frac 1 2(-1+\sqrt{9+8e(G) }).\label{eq01}\eean
\end{cor}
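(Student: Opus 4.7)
The plan is to simply solve the quadratic inequality given by Corollary \ref{cor:xiubd} for $\xi(G)$. Starting from $\frac{\xi(G)(\xi(G)+1)}{2}\le e(G)+1$, I would multiply through by $2$ to obtain $\xi(G)^2+\xi(G)-(2e(G)+2)\le 0$, and then apply the quadratic formula (in $\xi(G)$) together with the fact that $\xi(G)\ge 0$ to conclude that
$$\xi(G)\le \frac{-1+\sqrt{1+4(2e(G)+2)}}{2}=\frac{-1+\sqrt{9+8e(G)}}{2},$$
which is precisely the claimed bound.

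Since the inequality $\xi(G)(\xi(G)+1)\le 2e(G)+2$ already holds for every graph $G$ (Corollary \ref{cor:xiubd} applies without connectivity hypotheses, in light of the remark that $\xi$ of a disconnected graph equals the maximum of $\xi$ over its components, and an isolated vertex contributes $0$), there is no case analysis needed. The only subtlety worth noting is that the positive root of the quadratic must be chosen, which is automatic because $\xi(G)$ is a nonnegative integer while the negative root $\frac{-1-\sqrt{9+8e(G)}}{2}$ is negative.

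There is no real obstacle here; the corollary is essentially a restatement of Corollary \ref{cor:xiubd} after solving for $\xi(G)$. The main point of stating it in this form is that it expresses the bound explicitly as a function of $e(G)$, which is the shape needed for the application in the next section, where $e(G(v,p))$ is tightly concentrated around $p\binom{v}{2}$ and hence $\sqrt{8e(G)}$ is asymptotically $\sqrt{8p}\cdot\frac{v}{\sqrt 2}=2\sqrt{p}\,v$, yielding $\xi(G(v,p))\lesssim \sqrt{p}\,v$.
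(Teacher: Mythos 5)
Your proposal is correct and coincides with the paper's own (implicit) argument: the paper states this corollary as an immediate consequence of Corollary \ref{cor:xiubd}, and solving the quadratic inequality $\xi(G)^2+\xi(G)-(2e(G)+2)\le 0$ for the nonnegative root is exactly that step. The arithmetic $1+4(2e(G)+2)=9+8e(G)$ checks out, and your observation that Corollary \ref{cor:xiubd} already holds for all graphs (not just connected ones) is accurate.
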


\begin{cor}\label{coraxi}
 For $0 < p < 1$,
 $$p \le \exi(p)\le \sqrt{p}.$$
\end{cor}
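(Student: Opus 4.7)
The plan is to prove the two inequalities independently, each by combining a result from the preceding sections with a concentration estimate from Section~\ref{sdegree}.

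\textbf{Upper bound $\exi(p)\le\sqrt{p}$.} I would start from the pointwise bound (\ref{eq01}), namely $\xi(G)\le\tfrac{1}{2}\bigl(-1+\sqrt{9+8e(G)}\bigr)$, which holds for every graph $G$. Since the square-root function is concave, Jensen's inequality gives
\[
\E[\xi(G(v,p))]\;\le\;\tfrac{1}{2}\Bigl(-1+\sqrt{9+8\,\E[e(G(v,p))]}\,\Bigr)\;=\;\tfrac{1}{2}\Bigl(-1+\sqrt{9+8p\binom{v}{2}}\,\Bigr).
\]
Dividing by $v$ and letting $v\to\infty$ yields $\limsup_{v\to\infty}\E[\xi(G(v,p))]/v\le\sqrt{p}$, which is stronger than the required bound on the liminf.

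\textbf{Lower bound $p\le\exi(p)$.} The chain $\kappa(G)\le\nu(G)\le\xi(G)$ from Theorem~\ref{kappanu} reduces the task to bounding $\E[\kappa(G(v,p))]$ from below, and this is essentially the computation already performed in the proof of Theorem~\ref{avdelta}. Letting $\mathcal{E}$ denote the event that $\kappa(G)=\delta(G)$ and $\delta(G)\ge pv-\sqrt{6v\ln v}$, Theorem~\ref{thm:degree} together with Lemma~\ref{connectivity} (of Appendix~\ref{sappB}) shows that $\Pr[\mathcal{E}^c]=O(v^{-2})$. Since $\xi$ is nonnegative,
\[
\E[\xi(G)]\;\ge\;\E[\kappa(G)\,\mathbf{1}_{\mathcal{E}}]\;\ge\;\bigl(pv-\sqrt{6v\ln v}\bigr)\Pr[\mathcal{E}]\;=\;pv-O\bigl(\sqrt{v\ln v}\bigr).
\]
Dividing by $v$ and taking $\liminf$ yields $\exi(p)\ge p$.

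\textbf{Where the work is.} Neither direction presents real difficulty given the infrastructure already developed. The two places that need care are (i) the direction of Jensen's inequality, which is valid here precisely because the square root is concave and we seek an upper bound on $\E[\xi]$, and (ii) the conditioning step in the lower bound, where I would exploit the fact that $\xi\ge 0$ to discard the contribution of the rare event $\mathcal{E}^c$ rather than attempting to control $\E[\xi(G)\mid\mathcal{E}^c]$ directly.
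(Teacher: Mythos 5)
Your proof is correct. The lower bound is exactly the paper's argument: the paper dispatches $p\le\exi(p)$ by citing Theorem~\ref{kappanu} and the computation of $\E[\kappa(G(v,p))]$ already carried out in the proof of Theorem~\ref{avdelta}, which is precisely your conditioning on $\mathcal{E}$ via Theorem~\ref{thm:degree} and Lemma~\ref{connectivity}. For the upper bound the two arguments share the same key input (Corollary~\ref{cor:xiubd}, in the form of inequality~(\ref{eq01})) but finish differently: the paper fixes $\epsilon>0$, uses concentration of $e(G(v,p))$ to say that almost all graphs satisfy $\xi(G)\le\sqrt{1+\epsilon}\sqrt{p}\,v$, and (implicitly) controls the exceptional event via the trivial bound $\xi(G)\le v$; you instead apply Jensen's inequality to the concave function $x\mapsto\tfrac12\bigl(-1+\sqrt{9+8x}\bigr)$ and use only $\E[e(G(v,p))]=p\binom{v}{2}$. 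Your route is slightly cleaner in that it needs no concentration estimate and no handling of a low-probability exceptional set, and it directly bounds the limsup of $\E[\xi(G(v,p))]/v$; the paper's route is the one that generalizes when the pointwise bound is not a concave function of an easily computed statistic. Either way the corollary follows.
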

\bpf
The proof that $p \le \exi(p)$ follows from Theorem \ref{kappanu}
by exactly the same reasoning that showed that $\emr(p) \le 1-p$.

From inequality (\ref{eq01}), if $e(G)\ge 2$,
$$\xi(G)  \le  \sqrt{2e(G)}.$$
For a fixed $\epsilon>0$, as $v\to\infty$, almost all graphs
sampled from $G(v,p)$
satisfy
$$ e(G) \le (1+\epsilon)\frac p 2 v(v-1),$$
so almost all graphs satisfy
$$\xi(G)\le \sqrt{2e(G)} \le \sqrt{2(1+\epsilon)\frac p 2 v(v-1)}\le \sqrt{1+\epsilon}\sqrt{p}v.$$
This completes the proof of the second inequality
 $\exi(p)\le \sqrt{p}.$
\epf

Since for every graph $G$, $\xi(G)\le\M(G)$ and for every $v>1$ there exists a  graph $H$ such that $\xi(H)<\M(H)$,
$\E\left[\xi(G(v,p))\right]$ is strictly less than $\E\left[\M(G(v,p))\right]$ for $v>1$ and $0 < p < 1$.
However it is quite possible that taking the limit gives $\exi(p) + \emr(p) = 1$, in which case Corollary \ref{coraxi} would provide a better asymptotic lower bound for expected minimum rank than that given in Corollary \ref{coravmrlow}.  The graphs of these bounds are shown in Figure \ref{figall}.

 \begin{figure}[!h]
\begin{center}
\scalebox{.8}{\includegraphics{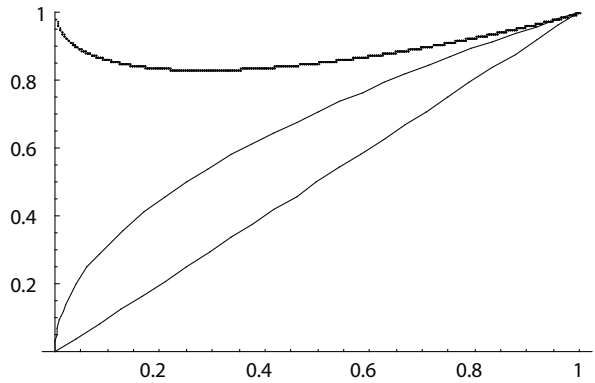}}
\caption{The graphs of $1-c(p)> \sqrt p> p$ for $0<p<1$}\label{figall}
\end{center}
\end{figure}

\appendix
\section{Appendix: Estimation of the binomial coefficient}\label{sapp}

\begin{lem} \label{lem:bincoeff}
Let $N$ be a positive integer and $\alpha,\beta,\gamma$ be real numbers with $\alpha,\beta\in(0,1)$ and $\gamma\in[0,1]$.  Then,
$$ \binom{(\alpha+\beta+\gamma)N}{\alpha N}\leq
E(\alpha,\beta,\gamma,N)\left(\frac{(\alpha+\beta)^{\alpha+\beta}}{\alpha^\alpha\beta^\beta}\right)^N , $$
where
$$  E(\alpha,\beta,\gamma,N)=\sqrt{\frac{\alpha+\beta}{2\pi\alpha\beta N}}\exp\left\{\frac{1}{12(\alpha+\beta)N}+\gamma\left(1+\frac{\alpha}{\beta}\right)N\right\} . $$
\end{lem}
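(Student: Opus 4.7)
The plan is to split the binomial coefficient into a ``main'' Stirling piece and a small correction. First, by cancelling $(\alpha N)!$ and telescoping the ratio $((\alpha+\beta+\gamma)N)!/((\alpha+\beta)N)!$ against $((\beta+\gamma)N)!/(\beta N)!$, I would write
$$\binom{(\alpha+\beta+\gamma)N}{\alpha N}=\binom{(\alpha+\beta)N}{\alpha N}\prod_{j=1}^{\gamma N}\frac{(\alpha+\beta)N+j}{\beta N+j}.$$
I would then estimate the binomial factor by Stirling and the product by an elementary inequality.

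For the product, each factor $\frac{A+j}{B+j}$ with $A=(\alpha+\beta)N>B=\beta N$ is a decreasing function of $j\ge 0$, so is at most $A/B=1+\alpha/\beta$. The product is therefore at most $(1+\alpha/\beta)^{\gamma N}$, and the (admittedly wasteful) chain $(1+x)^n\le e^{nx}\le e^{n(1+x)}$ produces exactly the exponential factor $\exp\{\gamma(1+\alpha/\beta)N\}$ appearing in $E(\alpha,\beta,\gamma,N)$.

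For $\binom{(\alpha+\beta)N}{\alpha N}$ I would apply the explicit two-sided Stirling bound $\sqrt{2\pi n}\,(n/e)^n\le n!\le \sqrt{2\pi n}\,(n/e)^n e^{1/(12n)}$, using the upper form on the numerator factorial and the lower form on each denominator factorial. Once the common $N^N$ factors are cancelled out of the $(n/e)^n$ terms, the exponential parts collapse to the main term $\bigl((\alpha+\beta)^{\alpha+\beta}/(\alpha^\alpha\beta^\beta)\bigr)^N$; the three $\sqrt{2\pi n}$ prefactors combine to $\sqrt{(\alpha+\beta)/(2\pi\alpha\beta N)}$; and the residual $e^{1/(12n)}$ from the numerator contributes $\exp\{1/(12(\alpha+\beta)N)\}$. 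Multiplying this estimate by the product bound from the previous paragraph reconstitutes exactly the stated inequality. The only thing to watch is the implicit assumption that $\alpha N$, $\beta N$, and $\gamma N$ are integers so the factorial form of Stirling applies verbatim; otherwise one uses the Gamma-function version with identical structure. There is no real obstacle here; the argument is essentially bookkeeping.
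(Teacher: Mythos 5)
Your proof is correct, and it reaches the stated bound by a genuinely different decomposition from the paper's. The paper applies the two-sided Stirling bound to all three factorials of the full coefficient $\binom{(\alpha+\beta+\gamma)N}{\alpha N}$ at once, obtaining a main term $\bigl((\alpha+\beta+\gamma)^{\alpha+\beta+\gamma}/(\alpha^{\alpha}(\beta+\gamma)^{\beta+\gamma})\bigr)^{N}$, and then massages this back to $\bigl((\alpha+\beta)^{\alpha+\beta}/(\alpha^{\alpha}\beta^{\beta})\bigr)^{N}$ by checking $\tfrac{\alpha+\beta+\gamma}{\alpha(\beta+\gamma)}\le\tfrac{\alpha+\beta}{\alpha\beta}$ for the prefactor and by factoring out a correction $\bigl(\bigl(1+\tfrac{\gamma}{\alpha+\beta}\bigr)^{\alpha+\beta}\bigl(1+\tfrac{\alpha}{\beta}\bigr)^{\gamma}\bigr)^{N}$, which it bounds via $1+x\le e^{x}$. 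You instead peel off the $\gamma$-contribution first as the telescoping product $\prod_{j=1}^{\gamma N}\tfrac{(\alpha+\beta)N+j}{\beta N+j}\le(1+\alpha/\beta)^{\gamma N}$ and apply Stirling only to the clean coefficient $\binom{(\alpha+\beta)N}{\alpha N}$, where the main term, the $\sqrt{(\alpha+\beta)/(2\pi\alpha\beta N)}$ prefactor, and the $e^{1/(12(\alpha+\beta)N)}$ residue drop out with no further algebra; all of your intermediate steps check out. Your route is cleaner and also exposes that the exponential factor in $E$ is wasteful from your side: you only need $\exp\{\gamma(\alpha/\beta)N\}$, whereas the paper genuinely incurs the extra $e^{\gamma N}$ from the $\bigl(1+\tfrac{\gamma}{\alpha+\beta}\bigr)^{(\alpha+\beta)N}$ term its rearrangement creates. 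The one trade-off is that your product representation additionally wants $\gamma N$ to be an integer, on top of the integrality of $\alpha N$ and $\beta N$ that both proofs implicitly assume for the factorial form of Stirling; you correctly flag this caveat, and it is harmless here.
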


\begin{proof}
We use Stirling's formula as given in \cite[page 216]{MGT}:
$$ \sqrt{2\pi n}\left(\frac{n}{e}\right)^n\leq n!\leq e^{1/(12n)}\sqrt{2\pi n}\left(\frac{n}{e}\right)^n . $$

From this formula,
\begin{eqnarray*}
\lefteqn{\binom{(\alpha+\beta+\gamma)N}{\alpha N}} \\ & \leq &
e^{1/(12(\alpha+\beta+\gamma)N)} \frac{\sqrt{2\pi(\alpha+\beta+\gamma)N}}{\sqrt{2\pi \alpha N}\sqrt{2\pi(\beta+\gamma)N}}
\left(\frac{(\alpha+\beta+\gamma)N}{e}\right)^{(\alpha+\beta+\gamma)N}
\left(\frac{e}{\alpha N}\right)^{\alpha N}
\left(\frac{e}{(\beta+\gamma)N}\right)^{(\beta+\gamma)N} \\
& \leq &
e^{1/(12(\alpha+\beta)N)} \sqrt{\frac{\alpha+\beta+\gamma}{2\pi \alpha(\beta+\gamma)N}} \frac{\left(\alpha+\beta+\gamma\right)^{(\alpha+\beta+\gamma)N}}
{\alpha^{\alpha N}
\left(\beta+\gamma\right)^{(\beta+\gamma)N}}
\end{eqnarray*}

Since $\frac{\alpha+\beta+\gamma}{\alpha(\beta+\gamma)}\le\frac{\alpha+\beta}{\alpha\beta}$,
\begin{eqnarray*}
\lefteqn{\binom{(\alpha+\beta+\gamma)N}{\alpha N}} \\
& \leq &
e^{1/(12(\alpha+\beta)N)} \sqrt{\frac{\alpha+\beta}{2\pi\alpha\beta N}} \left(\frac{\left(\alpha+\beta+\gamma\right)^{(\alpha+\beta+\gamma)}}
{\alpha^{\alpha}
\left(\beta+\gamma\right)^{(\beta+\gamma)}} \right)^N\\
& \leq &
e^{1/(12(\alpha+\beta)N)} \sqrt{\frac{\alpha+\beta}{2\pi\alpha\beta N}} \left(\frac{\left(\alpha+\beta\right)^{\alpha+\beta}}
{\alpha^{\alpha}\beta^{\beta}}\right)^N
\left(\left(\frac{\alpha+\beta+\gamma}{\alpha+\beta}\right)^{\alpha+\beta}
\left(\frac{\beta}{\beta+\gamma}\right)^\beta
\left(\frac{\alpha+\beta+\gamma}{\beta+\gamma}\right)^\gamma\right)^N \\
& \leq &
\left(\frac{\left(\alpha+\beta\right)^{\alpha+\beta}}
{\alpha^{\alpha}\beta^{\beta}}\right)^N
\sqrt{\frac{\alpha+\beta}{2\pi\alpha\beta N}} e^{1/(12(\alpha+\beta)N)}
 \left(\left(1+\frac{\gamma}{\alpha+\beta}\right)^{\alpha+\beta}
\left(1+\frac{\alpha}{\beta}\right)^\gamma\right)^N \\
\end{eqnarray*}
Because $1+x\le e^x$,
$$\binom{(\alpha+\beta+\gamma)N}{\alpha N} \leq 
\left(\frac{\left(\alpha+\beta\right)^{\alpha+\beta}}
{\alpha^{\alpha}\beta^{\beta}}\right)^N
\sqrt{\frac{\alpha+\beta}{2\pi\alpha\beta N}} e^{1/(12(\alpha+\beta)N)}   \exp\left\{\gamma N
+\gamma\frac{\alpha}{\beta}N\right\}\qedhere
$$
\end{proof}

\begin{cor}\label{bincoeffcor}  Let $p,c$ be fixed and let $r=cv$, with $v\to\infty$.

$$ \binom{rv+2p\binom{v}{2}+2v\sqrt{2\ln v}}{rv}
\le \left((1+o(1))\left(\frac{(c+p)^{c+p}}{c^cp^p}\right)\right)^{v^2} . $$
\end{cor}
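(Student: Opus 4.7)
The plan is to apply Lemma \ref{lem:bincoeff} directly, with the substitutions $N = v^2$, $\alpha = c$, $\beta = p$, and $\gamma = (2\sqrt{2\ln v} - p)/v$. First I would verify these choices are compatible with the hypotheses of the lemma: since $p$ and $c$ are fixed in $(0,1)$, both $\alpha$ and $\beta$ lie in $(0,1)$, and for $v$ sufficiently large one has $\gamma \in [0,1]$ (indeed $\gamma = o(1)$). With these substitutions, $\alpha N = c v^2 = rv$ and
$$ (\beta + \gamma) N = p v^2 - p v + 2v\sqrt{2\ln v} = 2p\binom{v}{2} + 2v\sqrt{2\ln v}, $$
so $(\alpha + \beta + \gamma)N$ matches the top of the binomial coefficient exactly, and the bottom matches as well.

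The main factor produced by the lemma is the base
$$ M := \frac{(c+p)^{c+p}}{c^c p^p} $$
raised to the $N$-th power, namely $M^{v^2}$. It therefore remains only to show that the prefactor $E := E(c,p,\gamma,v^2)$ can be absorbed into the $(1+o(1))^{v^2}$ term; that is, it suffices to establish $E \le (1+o(1))^{v^2}$, since then Lemma \ref{lem:bincoeff} yields
$$ \binom{(c+p+\gamma)v^2}{c v^2} \le E \cdot M^{v^2} \le \bigl((1+o(1))\, M\bigr)^{v^2}, $$
as claimed.

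To handle $E$, I would examine each of its components separately. The square-root prefactor $\sqrt{(c+p)/(2\pi c p v^2)}$ decays polynomially in $v$, and the term $1/(12(c+p)v^2)$ inside the exponential tends to $0$; both contribute only a subexponential factor, easily swallowed by $(1+o(1))^{v^2}$. The one piece that requires any attention is the dominant contribution to the exponential,
$$ \gamma \left(1 + \tfrac{c}{p}\right) v^2 = \left(1 + \tfrac{c}{p}\right)\bigl(2\sqrt{2 \ln v} - p\bigr) v = O\!\bigl(v \sqrt{\ln v}\bigr), $$
which is $o(v^2)$. Combining everything gives $\log E = o(v^2)$, equivalently $E^{1/v^2} = 1 + o(1)$, so $E \le (1+o(1))^{v^2}$ as needed.

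The only real obstacle is the bookkeeping: one must pick $\gamma$ so that it absorbs the discrepancy between $2p\binom{v}{2} = pv^2 - pv$ and the clean quantity $\beta N = p v^2$, and then check that the resulting $\gamma$ is small enough ($\gamma = O(\sqrt{\ln v}/v)$) to make the $\gamma(1+c/p)v^2$ term in $E$ only of order $v\sqrt{\ln v}$. Beyond this, the corollary is just a direct substitution into Lemma \ref{lem:bincoeff} followed by elementary asymptotic estimation.
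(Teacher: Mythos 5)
Your proposal is correct and follows exactly the same route as the paper: the paper likewise sets $N=v^2$, $\alpha=c$, $\beta=p$, and $\gamma=\frac{1}{v^2}(-pv+2v\sqrt{2\ln v})$ and invokes Lemma~\ref{lem:bincoeff}. Your additional verification that the prefactor $E(\alpha,\beta,\gamma,N)$ is $\exp\{O(v\sqrt{\ln v})\}$ and hence absorbed into $(1+o(1))^{v^2}$ simply spells out a step the paper leaves implicit.
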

\bpf
$$ \binom{rv+2p\binom{v}{2}+2v\sqrt{2\ln v}}{rv}=\binom{c v^2+pv^2 -pv+2v\sqrt{2\ln v}}{cv^2} . $$
Let $N=v^2$, $\alpha=c$, $\beta=p$, and $\gamma=\frac 1 {v^2}(-pv +2v\sqrt{2\ln v})$.  With $p$ and $c$ fixed, by Lemma \ref{lem:bincoeff} we see that
$$ \binom{rv+2p\binom{v}{2}+2v\sqrt{2\ln v}}{rv}
\le \left((1+o(1))\left(\frac{(c+p)^{c+p}}{c^cp^p}\right)\right)^{v^2} . \qedhere$$
\epf


\section{Appendix: Connectivity is minimum degree}\label{sappB}
Bollob\'as and Thomason \cite{BT85} proved that for $G\sim G(v,p)$, regardless of $p$, then $\Pr[\kappa(G)<\delta(G)]\rightarrow 0$ as $v\rightarrow\infty$.  Bollob\'as \cite{B83} proved the result for $p$ in a restricted interval, but the statement of his theorem is much more general.  For our result, we need to bound the probability that $\kappa(G)=\delta(G)$ where $G\sim G(v,p)$, but  need the result only for a  fixed $p$.

\begin{lem}\label{connectivity}
   Let $p\in (0,1)$ be fixed and $G$ be distributed according to $G(v,p)$.  If $v$ is sufficiently large, then
   $$ \Pr[\kappa(G)<\delta(G)]\leq 3v^{-2} . $$
\end{lem}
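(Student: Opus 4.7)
The plan is a two-step argument: condition on a high-probability lower bound for $\delta(G)$, then union-bound over possible small cuts. First, set $\delta_0 := pv - \sqrt{6v\ln v}$. The one-sided Chernoff-Hoeffding bound that underlies Theorem~\ref{thm:degree} gives $\Pr[\delta(G) < \delta_0] \leq v^{-2}$, so it suffices to prove $\Pr[\kappa(G) < \delta(G),\ \delta(G) \geq \delta_0] \leq 2v^{-2}$. On that event I pick a minimum vertex cut $S$ and a partition $V = A \cup S \cup B$ with $A, B$ non-empty, $|A|\leq|B|$, and no $A$-$B$ edges; writing $a=|A|$, $s=|S|$, $b=|B|$, every $x \in A$ has all its neighbors in $A \cup S$, so the inequality $\deg(x) \geq \delta_0$ forces $a + s - 1 \geq \delta_0$, i.e.\ $a \geq \delta_0 - s + 1$, and likewise for $b$; in particular $a, b \geq 2$.

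A union bound over all such $(A, S, B)$ yields
\[
\Pr[\kappa < \delta,\ \delta \geq \delta_0] \;\leq\; \sum_{s,a}\binom{v}{s}\binom{v-s}{a}(1-p)^{ab}\,P_{a,s}\,P_{b,s},
\]
summed over $s \geq 0$ and $\max(2,\delta_0-s+1) \leq a \leq (v-s)/2$, where $P_{a,s}$ denotes the probability that every vertex of $A$ has at least $\delta_0$ neighbors in $A \cup S$. The three events ``no $A$-$B$ edges'', ``$A$-degrees all $\geq \delta_0$'', and ``$B$-degrees all $\geq \delta_0$'' depend on disjoint sets of edges and are therefore mutually independent; dropping all but one vertex of $A$ gives $P_{a,s} \leq \Pr[\mathrm{Bin}(a+s-1,p)\geq\delta_0] \leq \binom{a+s-1}{\delta_0}p^{\delta_0}$, and similarly for $P_{b,s}$.

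It remains to show the double sum is $\leq 2v^{-2}$. The critical case is $a=2$ with $s$ near $\delta_0-1$: there $P_{a,s}$ is at most $p^{\delta_0}$, the combinatorial factor is $\binom{v}{s}\binom{v-s}{2} = \Theta\bigl(v^{3/2}(p^pq^q)^{-v}\bigr)$ by Stirling (with $q=1-p$), and $(1-p)^{ab} = (1-p)^{2(1-p)v + O(\sqrt{v\ln v})}$. The Stirling identity $\binom{v}{\delta_0}p^{\delta_0}(1-p)^{(1-p)v} = \Theta(1/\sqrt{v})$ then makes most factors telescope, leaving a bound of order $v^{3/2}(1-p)^{(1-p)v}$, which is exponentially small for each fixed $p \in (0,1)$. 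For larger $a$, or for $s$ further from $\delta_0-1$, either $(1-p)^{ab}$ or the Chernoff-type bound on $P_{a,s}$ tightens enough to compensate for the extra binomial factors, so those contributions are also exponentially small; adding them up gives $o(v^{-2})$.

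The main obstacle is establishing this exponential decay uniformly in $p \in (0,1)$: the borderline case above only balances because of the precise Stirling cancellation between $\binom{v}{\delta_0}p^{\delta_0}$ and $(1-p)^{(1-p)v}$, and for intermediate $s$ I would need a sharper Chernoff tail $\Pr[\mathrm{Bin}(n,p)\geq k] \leq \exp\bigl(-(k-np)^2/(2n)\bigr)$ (valid for $k > np$) combined with the binomial-coefficient estimates from Appendix~\ref{sapp} to keep the combinatorial overcount in check.
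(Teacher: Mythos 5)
There is a genuine gap: your union bound has no upper limit on $s=|S|$, and for $s$ well above $pv$ the summands are not small --- they are exponentially large. The true event ``$\kappa(G)=s<\delta(G)$'' forces \emph{every} vertex to have degree greater than $s$, but you relax this to ``degree at least $\delta_0\approx pv$,'' which costs nothing when $s\lesssim pv$ but discards all the relevant information when $s\gg pv$. Concretely, take $p=1/2$, $s=0.9v$, $a=2$, $b=0.1v-2$. The combinatorial factor is $\binom{v}{0.9v}\binom{0.1v}{2}\approx v^2e^{0.325v}$; the no-edge factor is $(1/2)^{2b}\approx e^{-0.139v}$; your bound $P_{a,s}\le\binom{a+s-1}{\delta_0}p^{\delta_0}\approx\binom{0.9v}{0.5v}2^{-0.5v}\approx e^{0.27v}$ exceeds $1$ and is useless, and even the sharp Chernoff tail you mention gives only $P_{a,s}\le e^{-(0.05v)^2/(1.8v)}=e^{-0.0014v}$; and $P_{b,s}=1-o(1)$, since a vertex of $B$ has almost all of $V(G)$ available inside $B\cup S$ and so easily reaches $\delta_0$ neighbors there. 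The product is of order $v^2e^{0.18v}$, so your double sum diverges rather than being $\le 2v^{-2}$. The missing ingredient is an a priori \emph{upper} bound on $\delta(G)$ (hence on $s<\delta(G)$): conditioning also on $\delta(G)\le pv+\sqrt{6v\ln v}$ (the other half of Theorem~\ref{thm:degree}, or the edge-count bound of Theorem~\ref{thm:edges}, which is what the paper uses) caps $s$ at $pv+O(\sqrt{v\ln v})$ and eliminates the bad range, at a further cost of $v^{-2}$ in probability --- which is precisely why the lemma's constant is $3$ and not $2$.

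Beyond that, you have verified only the single boundary case $a=2$, $s\approx\delta_0-1$, and you acknowledge yourself that carrying out the summation uniformly over the intermediate range is an unresolved obstacle; as written the proof is incomplete even after the fix above. For comparison, the paper sidesteps this bookkeeping by splitting on $|V_1|$ alone after bounding $\delta$ from above: for $2\le|V_1|\le t$ with $t\approx p(1-p)v$ it notes that two vertices $x_1,x_2\in V_1$ must satisfy $|N[x_1]\cup N[x_2]|<t+\delta$, a lower-tail event for a sum of $v-2$ indicators with mean $2+(2p-p^2)(v-2)$, and union-bounds over $\binom{v}{2}$ pairs; for $|V_1|>t$ the factor $(1-p)^{|V_1|(v-|V_1|-\delta)}=e^{-\Theta(v^2)}$ overwhelms every binomial coefficient. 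Both halves lean on the upper bound for $\delta$, which is exactly the step your argument omits.
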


\bpf 
 Let $\delta=\delta(G)$.  
By Theorem~\ref{thm:edges} we see that, with probability at least $1-v^{-2}$,
\beq\label{eqappBdelta}
\delta \leq\frac{2e(G)}{v}\leq\frac{2\left(p\binom{v}{2}+v\sqrt{2\ln v}\right)}{v}=p(v-1)+2\sqrt{2\ln v}\leq pv+2\sqrt{2\ln v} . \eeq
For the remainder of the proof we assume $\delta \leq pv+2\sqrt{2\ln v} .$

 If $\kappa(G)<\delta$, then there exists a partition $V(G)=V_1\cup S\cup V_2$ such that $|S|<\delta$, $2\leq |V_1|\leq |V_2|$ and there is no edge between $V_1$ and $V_2$.  Let the {\em closed neighborhood of vertex $x$} be denoted $N[x]$ and be equal to $\{x\}\cup N(x)$.

We will show first that there is an integer $t$ such that the probability that $2\leq |V_1|\leq t$ is at most $v^{-2}$ (we will determine the value of $t$ later).  By a different calculation, we will then show that the probability that $t<|V_1|\leq (v+\delta)/2$ is also at most $v^{-2}$.  Note that we don't attempt to optimize the probability or to give a range of $p$ over which these conditions hold.  A total probability of $2v^{-2}$ is sufficient for our purposes and results in an easier proof.

The event $\{2\leq |V_1|\leq t\}$ can occur only if there are two distinct vertices, $x_1$ and $x_2$, such that the cardinality of the union of their closed neighborhoods is less than $t+\delta$.  
For vertices $y_i\in V(G)\backslash \{x_1,x_2\}$, let $Y_i$ be independent indicator variables for $y_i\in N[x_1]\cup N[x_2]$.  Since the probability $y_i\notin N[x_1]\cup N[x_2]$ is $(1-p)^2$, $$\E[|N[x_1]\cup N[x_2]|]=\E[2+Y_1+\dots+ Y_{v-2}]]=2+(v-2)(2p-p^2).$$
Hence, assuming $(t+\delta)-\left(2+(2p-p^2)(v-2)\right)<0$, by the negative version of Theorem~\ref{thm:chernoff},
\begin{eqnarray*}
   \Pr\left[2\leq |V_1|\leq t\right] & \leq & 
   \binom{v}{2}\lefteqn{\Pr\left[\left|N[x_1]\cup N[x_2]\right|<t+\delta\right]} \\
   & = & \binom{v}{2}\Pr\left[\left|N[x_1]\cup N[x_2]\right|-\left(2+(2p-p^2)(v-2)\right)
   <(t+\delta)-\left(2+(2p-p^2)(v-2)\right)\right] \\
   & \leq & \exp\left\{2\ln v-\frac{1}{2(v-2)}\left((t+\delta)-2-(2p-p^2)(v-2)\right)^2\right\}.
\end{eqnarray*}
Thus if $t+\delta\leq(2p-p^2)(v-2)+2-3\sqrt{v\ln v}$, then $\Pr\left[2\leq |V_1|\leq t\right]<v^{-2}$.

Since $v\geq 2$ and we have assumed $\delta \leq pv+2\sqrt{2\ln v}$, we may set
$$ t=(2p-p^2)v-3\sqrt{v\ln v}-\delta\geq (2p-p^2)v-3\sqrt{v\ln v}-pv-2\sqrt{2\ln v}\geq p(1-p)v-5\sqrt{v\ln v} . $$
We will use the bound $\binom{v}{i}\leq\left(\frac{ev}{i}\right)^i$ which is true for all $1\leq i\leq v$ \cite[page 216]{MGT}, and the trivial bound $\binom{v-i}{\delta}\leq v^{\delta}$, which is true for all $v-i,\delta\geq 0$.  The event $\{t<|V_1|\leq (v+\delta)/2\}$ has a probability which is bounded as follows:
\begin{eqnarray*}
   \Pr\left[t<|V_1|\leq (v+\delta)/2\right] & \leq & \sum_{i=\lfloor  t+1\rfloor}^{\lfloor(v+\delta)/2\rfloor} \binom{v}{i}\binom{v-i}{\delta}(1-p)^{i(v-i-\delta)} \\
   & \leq & \sum_{i=\lfloor  t+1\rfloor}^{\lfloor(v+\delta)/2\rfloor} \left(\frac{ev}{i}\right)^iv^{\delta}(1-p)^{i(v-\delta)/2} \\
   & \leq & v^{\delta}\sum_{i=\lfloor  t+1\rfloor}^{\lfloor(v+\delta)/2\rfloor} \left[ev(1-p)^{(v-\delta)/2}\right]^i
\end{eqnarray*}
If $v$ is large enough, then $ev(1-p)^{(v-\delta)/2}<ev(1-p)^{(v-pv-2\sqrt{2\ln v})/2}<1$.  Using this in our calculation, along with the bound $1-p\leq e^{-p}$,
\begin{eqnarray}
   \Pr\left[t<|V_1|\leq (v+\delta)/2\right] & \leq & 
   v^{\delta}\sum_{i=\lfloor  t+1\rfloor}^{\lfloor(v+\delta)/2\rfloor} \left[ev(1-p)^{(v-\delta)/2}\right]^i \nonumber \\
   & \leq & v^{\delta+1}\left[ev(1-p)^{(v-pv-2\sqrt{2\ln v})/2}\right]^t \nonumber \\
   & \leq & v^{\delta+1+t}e^t\exp\left\{-\frac{pvt}{2}+\frac{p^2vt}{2}+pt\sqrt{2\ln v}\right\} \nonumber \\
   & \leq & v^ve^v\exp\left\{-\frac{p(1-p)vt}{2}+pv\sqrt{2\ln v}\right\} \nonumber \\
   & = & \exp\left\{v\ln v+v+pv\sqrt{2\ln v}-\frac{p(1-p)}{2}vt\right\} \label{eq:appB}
\end{eqnarray}
Since $t\geq p(1-p)v-5\sqrt{v\ln v}$, the expression in (\ref{eq:appB}) is easily bounded above by $\exp\{-2\ln v\}$ for $v$ large enough. 

Summarizing, if $v$ is large enough, then with probability at least $1-3v^{-2}$, there is no set $S$ of size less than $\delta$ such that $V(G)-S$ is disconnected.
\epf


\end{document}